\documentclass[11pt,reqno]{amsart}
\usepackage{amsmath,amssymb,amsthm}
\usepackage{mathtools}
\usepackage{graphicx}
\usepackage[margin=1.15in]{geometry}
\usepackage[colorlinks=true,linkcolor=blue,citecolor=blue,urlcolor=blue]{hyperref}

\numberwithin{equation}{section}
\theoremstyle{plain}
\newtheorem{theorem}{Theorem}[section]
\newtheorem{proposition}[theorem]{Proposition}
\newtheorem{lemma}[theorem]{Lemma}
\newtheorem{corollary}[theorem]{Corollary}
\theoremstyle{definition}

\newtheorem{remark}[theorem]{Remark}

\newcommand{\R}{\mathbb{R}}
\newcommand{\Ho}{H^{\circ}}
\newcommand{\mub}{\bar\mu}
\newcommand{\eps}{\varepsilon}
\newcommand{\gs}{\gamma_{*}}
\DeclareMathOperator{\dv}{div}
\DeclareMathOperator{\sgn}{sgn}

\begin{document}
\title[Asymptotics for Hardy--Emden--Fowler equations]
{Radial solutions of quasilinear Hardy--Emden--Fowler equations:
exact decay rates, Delaunay solutions, and nonexistence}

% authors
\author{Phuong Le}
\address{Phuong Le$^{1,2}$ (ORCID: 0000-0003-4724-7118)\newline
	$^1$Faculty of Economic Mathematics, University of Economics and Law, Ho Chi Minh City, Vietnam; \newline
	$^2$Vietnam National University, Ho Chi Minh City, Vietnam}
\email{phuongl@uel.edu.vn}

\subjclass[2020]{34C05, 34C37, 34D05, 35B40, 35J75, 35J92}
\keywords{$p$-Laplacian, Hardy potential, Emden--Fowler transformation,
asymptotic behaviour, Dulac criterion, Lyapunov function, Fowler--Delaunay
solutions, Serrin exponent}

\begin{abstract}
Let $N>p>1$, $s>-p$, $0<\mu<\mub:=\big(\frac{N-p}{p}\big)^{p}$ and $q>p-1$.
We determine the behaviour at infinity of \emph{every} positive solution of
the quasilinear Hardy--Emden--Fowler equation
\[
-\big(r^{N-1}|v'|^{p-2}v'\big)'=\mu\,r^{N-1-p}v^{p-1}+r^{N-1+s}v^{q},
\qquad r>R_{0}.
\]
Three rates compete: the self-similar rate $r^{-\gs}$ with
$\gs=\frac{p+s}{q-p+1}$, and the two Hardy rates $r^{-\nu_{\pm}}$, where
$\nu_{\pm}$ are the roots of $h(\nu):=\nu^{p-1}(N-p-(p-1)\nu)=\mu$. Writing
$p^{*}_{s}=\frac{p(N+s)}{N-p}$ and $q_{S}=p-1+\frac{p+s}{\nu_{+}}$ for the
Hardy--Sobolev and Serrin exponents, we obtain a complete picture in four
regimes. If $q>p^{*}_{s}-1$, every solution realises exactly one of the three
rates, with a logarithmic correction in the borderline case $h(\gs)=\mu$, and
we compute the leading constants. If $q=p^{*}_{s}-1$ the system becomes
integrable: besides the singular self-similar solution and a heteroclinic
``bubble'' there is a one-parameter family of Fowler--Delaunay solutions, for
which $r^{\gs}v(r)$ is a non-constant periodic function of $\ln r$. If
$q_{S}<q<p^{*}_{s}-1$ only two orbits survive, and if $q\le q_{S}$ there is no
positive solution at all in an exterior domain. This is the quasilinear,
weighted counterpart of the picture obtained by Franca and Sfecci for $p=2$.
We work with the slope variables $x=-rv'/v$, $Y=r^{p+s}v^{q-p+1}$, in which the
equation becomes a planar system that is real-analytic on the half-plane
$\{x>0\}$ for every $1<p<N$, so that no restriction of the type $p\le2$ is
needed; the analysis then rests on
a universal barrier $x<\nu_{+}$, on an explicit Dulac function which
degenerates into a first integral exactly at $q=p^{*}_{s}-1$, and on an
explicit Lyapunov function.
\end{abstract}

\maketitle

%%%%%%%%%%%%%%%%%%%%%%%%%%%%%%%%%%%%%%%%%%%%%%%%%%%%%%%%%%%%%%%%%%%%%%%%%%%%%%%
\section{Introduction and results}
%%%%%%%%%%%%%%%%%%%%%%%%%%%%%%%%%%%%%%%%%%%%%%%%%%%%%%%%%%%%%%%%%%%%%%%%%%%%%%%

\subsection{The problem}

Throughout this paper we fix
\begin{equation}\label{eq:standing}
N>p>1,\qquad s>-p,\qquad 0<\mu<\mub:=\Big(\frac{N-p}{p}\Big)^{p},
\qquad q>p-1,
\end{equation}
and we study the behaviour as
$r\to+\infty$ of positive solutions of the ordinary differential equation
\begin{equation}\label{eq:ode}
-\big(r^{N-1}|v'|^{p-2}v'\big)'
=\mu\,r^{N-1-p}v^{p-1}+r^{N-1+s}v^{q},
\qquad r>R_{0}>0 .
\end{equation}
By a \emph{solution} of \eqref{eq:ode} we always mean a function
$v\in C^{1}\big((R_{0},+\infty)\big)$, positive, such that
$r\mapsto r^{N-1}|v'|^{p-2}v'$ is of class $C^{1}$ and \eqref{eq:ode} holds
pointwise.

Equation \eqref{eq:ode} is the radial form, in the exterior domain
$\R^{N}\setminus \overline{B_{R_{0}}}$, of the Hardy--Lane--Emden equation
\begin{equation}\label{eq:pde}
-\Delta_{p}u-\frac{\mu}{|x|^{p}}|u|^{p-2}u=|x|^{s}|u|^{q-1}u
\qquad\text{in }\R^{N}\setminus \overline{B_{R_{0}}} ,
\end{equation}
in the sense that $u(x)=v(|x|)$ solves \eqref{eq:pde} if and only if $v$ solves
\eqref{eq:ode}; solutions defined on the whole of $(0,+\infty)$, such as those
of alternatives {\rm(II)} and {\rm(III)} of Theorem~\ref{thm:critical},
correspond to solutions
of \eqref{eq:pde} in the punctured space $\R^{N}\setminus\{0\}$. Equation
\eqref{eq:ode} is also the radial form of the Finsler counterpart of
\eqref{eq:pde}, in which $\Delta_{p}$ is
replaced by the anisotropic operator
$\Delta^{H}_{p}u=\dv\big(H^{p-1}(\nabla u)\nabla H(\nabla u)\big)$ and $|x|$ by
the dual Minkowski norm $\Ho(x)$; see \cite{EMSV2024}. The equation arises in
the study of non-Newtonian fluids, of Caffarelli--Kohn--Nirenberg type
inequalities \cite{CKN}, and of critical problems involving simultaneously the
Sobolev and the Hardy exponent \cite{AbdellaouiPeral,GarciaAzoreroPeral}.

Three exact decay rates are visible in \eqref{eq:ode}. Set
\begin{equation}\label{eq:hgamma}
h(\gamma):=\gamma^{p-1}\big(N-p-(p-1)\gamma\big),\qquad \gamma>0 .
\end{equation}
The function $h$ vanishes at $0$ and at $\frac{N-p}{p-1}$, is strictly
increasing on $\big(0,\frac{N-p}{p}\big)$, strictly decreasing afterwards, and
attains its maximum $\mub$ exactly at $\frac{N-p}{p}$. Consequently
$0<\mu<\mub$ guarantees that
\begin{equation}\label{eq:nupm}
h(\nu)=\mu\quad\text{has exactly two roots}\quad
0<\nu_{-}<\frac{N-p}{p}<\nu_{+}<\frac{N-p}{p-1},
\end{equation}
and $r^{-\nu_{\pm}}$ are the two homogeneous solutions of the pure Hardy
equation $-\big(r^{N-1}|v'|^{p-2}v'\big)'=\mu r^{N-1-p}v^{p-1}$. On the other
hand
\begin{equation}\label{eq:gstar}
\gs:=\frac{p+s}{q-p+1}
\end{equation}
is the self-similar exponent: $v(r)=A_{*}r^{-\gs}$ solves \eqref{eq:ode}
identically provided $h(\gs)>\mu$, with
$A_{*}=\big(h(\gs)-\mu\big)^{1/(q-p+1)}$. Writing
\begin{equation}\label{eq:pstar}
p^{*}_{s}:=\frac{p(N+s)}{N-p}
\end{equation}
for the critical Hardy--Sobolev exponent attached to the weight $|x|^{s}$
(so that $p^{*}_{0}=p^{*}$ is the Sobolev exponent), an elementary computation
gives
\begin{equation}\label{eq:qsuper}
q>p^{*}_{s}-1\iff \gs<\frac{N-p}{p} .
\end{equation}
Since $q\mapsto\gs(q)$ is a decreasing bijection from $(p-1,+\infty)$ onto
$(0,+\infty)$, the position of $\gs$ relative to the three special values
$\nu_{-}<\frac{N-p}{p}<\nu_{+}$ splits the range of $q$ into four regimes. We
therefore introduce, besides $p^{*}_{s}$, the \emph{Serrin exponent}
\begin{equation}\label{eq:qS}
q_{S}:=p-1+\frac{p+s}{\nu_{+}} ,
\end{equation}
characterised by $\gs(q_{S})=\nu_{+}$, together with its counterpart
\begin{equation}\label{eq:qbar}
\bar q:=p-1+\frac{p+s}{\nu_{-}} ,
\end{equation}
characterised by $\gs(\bar q)=\nu_{-}$. Since $q\mapsto\gs(q)$ is decreasing,
with $\gs(q)\to+\infty$ as $q\to(p-1)^{+}$, and since
$\nu_{-}<\frac{N-p}{p}=\gs(p^{*}_{s}-1)<\nu_{+}$ by \eqref{eq:nupm}, we have
\[
p-1<q_{S}<p^{*}_{s}-1<\bar q<+\infty ,
\]
and
\begin{equation}\label{eq:regimes}
\begin{array}{llll}
\text{(A) supercritical:} & q>p^{*}_{s}-1 &\iff& \gs<\tfrac{N-p}{p};\\[2pt]
\text{(B) critical:} & q=p^{*}_{s}-1 &\iff& \gs=\tfrac{N-p}{p};\\[2pt]
\text{(C) subcritical:} & q_{S}<q<p^{*}_{s}-1 &\iff& \tfrac{N-p}{p}<\gs<\nu_{+};\\[2pt]
\text{(D) sub-Serrin:} & p-1<q\le q_{S} &\iff& \gs\ge\nu_{+} .
\end{array}
\end{equation}
When $\mu\to0^{+}$ and $s=0$ one has $\nu_{+}\to\frac{N-p}{p-1}$ and
$q_{S}\to\frac{N(p-1)}{N-p}$, the classical Serrin exponent for nonexistence of
positive solutions in exterior domains; for $p=2$ this is $\frac{N}{N-2}$.

Since $h$ increases on $\big(0,\frac{N-p}{p}\big)$, decreases afterwards, and
equals $\mu$ exactly at $\nu_{\pm}$,
\begin{equation}\label{eq:trichotomycases}
h(\gs)>\mu\iff\nu_{-}<\gs<\nu_{+},
\qquad
h(\gs)\le\mu\iff \gs\le\nu_{-}\ \text{ or }\ \gs\ge\nu_{+} ,
\end{equation}
so that the interior equilibrium $P_{*}$ of \eqref{eq:system} below exists
exactly when $\nu_{-}<\gs<\nu_{+}$: always in regimes {\rm(B)} and {\rm(C)},
and in regime {\rm(A)} precisely when $\gs>\nu_{-}$.

\subsection{Related literature}\label{ss:lit}

The Fowler transformation and the associated phase-plane analysis form a
well-established body of work, and it is against this background that our
contribution should be measured. We describe it in four groups.

\smallskip
\noindent\emph{(a) The homogeneous Hardy equation.} When the nonlinearity is
absent, \eqref{eq:ode} reduces to the Euler--Lagrange equation of the (weighted)
Hardy inequality, and its radial solutions behave exactly like $r^{-\nu_{-}}$ or
$r^{-\nu_{+}}$. A complete description, in the more general weighted form
$\dv\big(|x|^{\alpha}|\nabla u|^{p-2}\nabla u\big)
+\mu|x|^{\alpha-p}|u|^{p-2}u=0$ and for all three positions of $\mu$ relative
to the corresponding Hardy constant $\big|\frac{N-p+\alpha}{p}\big|^{p}$
(which is $\mub$ for $\alpha=0$), was given by Itakura and Tanaka
\cite{ItakuraTanaka}; see also Bakhadda and Bouzelmate \cite{BakhaddaBouzelmate}
for a treatment by a Fowler-type transformation when $2<p<N$. These works supply
the two Hardy rates $\nu_{\pm}$, but say nothing about the interaction with a
nonlinear term.

\smallskip
\noindent\emph{(b) The $p$-Laplacian without Hardy potential.} For $\mu=0$ the
asymptotic classification of positive radial solutions of \eqref{eq:ode} is
classical: it goes back to Guedda and V\'eron \cite{GueddaVeron}, who already
identified the two thresholds that, for $p=2$, $s=0$, are the Serrin exponent
$\frac{N}{N-2}$ and the Sobolev exponent $\frac{N+2}{N-2}$, and to
Bidaut-V\'eron \cite{BidautVeron1989,BV-V}, who introduced the Fowler
transformation
\begin{equation}\label{eq:fowlervars}
\mathsf X=u(r)\,r^{\frac{N-p}{p}},\qquad
\mathsf Y=u'(r)|u'(r)|^{p-2}r^{\frac{N(p-1)}{p}} ,
\end{equation}
in the quasilinear setting. The resulting planar system was studied
systematically by Franca and collaborators
\cite{Franca2004,Franca2008,Franca2009,FrancaJohnson} and, in the semilinear
case $p=2$, by Johnson--Pan--Yi \cite{JPY} and
Bianchi--Egnell \cite{BianchiEgnell}, where the whole theory is by now
classical; see \cite{QS} for a systematic account.
In the generalised form of \cite{Franca2008}, where $\Delta_{p}u+f(u,|x|)=0$ is
treated for a wide class of nonlinearities, the transformation reads
$x_{l}=u(r)r^{\alpha_{l}}$, $y_{l}=u'(r)|u'(r)|^{p-2}r^{\beta_{l}}$ with
$\alpha_{l}=\frac{p}{l-p}$, $\beta_{l}=\frac{(p-1)l}{l-p}$, and yields
\begin{equation}\label{eq:francasystem}
\dot x_{l}=\alpha_{l}x_{l}+y_{l}|y_{l}|^{\frac{2-p}{p-1}},
\qquad
\dot y_{l}=\gamma_{l}y_{l}-g_{l}(x_{l},t),
\qquad \gamma_{l}:=\beta_{l}-N+1 ,
\end{equation}
$g_{l}$ denoting the nonlinearity $f$ read in the new variables.
In these variables the phase portrait of the autonomous system is completely
understood. The structure of the positive solutions changes at the two
thresholds $\sigma_{S}=\frac{p(N-1)}{N-p}$ (the Serrin exponent, in the
normalisation in which the nonlinearity is written $u^{\sigma-1}$, i.e.\
$\sigma=q+1$) and
$p^{*}=\frac{Np}{N-p}$, giving the trichotomy called \emph{Sub}, \emph{Crit},
\emph{Sup} in \cite[\S1]{Franca2008}: the interior equilibrium is unstable for
$\sigma_{S}<q+1<p^{*}$, a \emph{centre} at $q+1=p^{*}$ and stable for
$q+1>p^{*}$.
At the critical exponent the system is moreover Hamiltonian, with conserved
energy
$\mathcal E=\frac{N-p}{p}\mathsf X\mathsf Y
+\frac{p-1}{p}|\mathsf Y|^{\frac{p}{p-1}}
+\frac{|\mathsf X|^{p^{*}}}{p^{*}}$,
so that the equilibrium is surrounded by periodic orbits and enclosed by a
homoclinic loop; see \cite[Fig.~1]{FrancaSfecci} and \cite[\S2]{DFS2026}. A direct
computation gives $p^{*}=p^{*}_{s}$ when $s=0$, and $\sigma_{S}=q_{S}+1$ in the
limit $\mu\to0^{+}$, so the four regimes \eqref{eq:regimes} reduce exactly to this
trichotomy, and our critical-regime result (Theorem~\ref{thm:critical}) is the
analogue of this classical picture in the presence of a Hardy potential and of
the weight $|x|^{s}$. The classification of \cite{Franca2004} for
$\Delta_{p}u+K(|x|)u|u|^{q-1}=0$, $q>\frac{N(p-1)}{N-p}$, is the limiting case
$\mu=0$ of the range considered here.

\smallskip
\noindent\emph{(c) The Laplacian with a Hardy potential.} The case $p=2$,
$\mu>0$ has been treated in depth by Franca and Sfecci \cite{FrancaSfecci},
who study $\Delta u+\frac{\mathsf h(|x|)}{|x|^{2}}u+f(u,|x|)=0$ with
$\mathsf h$ not necessarily constant, again by the Fowler transformation. Their
analysis contains, for $p=2$ and $\mathsf h\equiv\eta$ constant, precisely the
structure we establish here for general $p$: with
$\varkappa(\eta)=\frac{(N-2)-\sqrt{(N-2)^{2}-4\eta}}{2}$ they introduce the two
exponents
\[
2_{*}(\eta)=2\,\frac{N+\sqrt{(N-2)^{2}-4\eta}}{N-2+\sqrt{(N-2)^{2}-4\eta}},
\qquad
\mathrm I(\eta)=2\,\frac{N-\sqrt{(N-2)^{2}-4\eta}}{N-2-\sqrt{(N-2)^{2}-4\eta}} ,
\]
and prove that the interior equilibrium is unstable if
$2_{*}(\eta)<l<2^{*}$, a centre if $l=2^{*}$, and stable if
$2^{*}<l<\mathrm I(\eta)$, the parameter $l$ playing, for $s=0$, the role of
our $q+1$. A direct computation shows that, for $p=2$, $s=0$ and $\eta=\mu$,
\begin{equation}\label{eq:matchFS}
\varkappa(\eta)=\nu_{-},\qquad
N-2-\varkappa(\eta)=\nu_{+},\qquad
2_{*}(\eta)=q_{S}+1,\qquad
\mathrm I(\eta)=\bar q+1 ,
\end{equation}
with $\bar q$ as in \eqref{eq:qbar}; so the exponents \eqref{eq:qS},
\eqref{eq:qbar} and the four regimes \eqref{eq:regimes} are the quasilinear,
weighted counterparts of theirs. See also \cite{Bae,CirsteaFarcaseanu,WeiDu,FengTanWei},
\cite{JeongLee2013} for stable and finite Morse index solutions,
\cite{HeXiang2016a} for the effect of an additional mass term, and
\cite{AFP,Xiang2015,Xiang2017,OSV,HeXiang2016b} for the critical
Hardy--Sobolev exponent.

\smallskip
\noindent\emph{(d) The gap.} The two extensions above have not been combined:
\cite{FrancaSfecci} treats Hardy potentials only for $p=2$, as is visible
from their system, in which the Hardy term enters the second equation
\emph{linearly}, through the constant matrix
$\begin{psmallmatrix}\alpha&1\\-\mathsf h&\gamma\end{psmallmatrix}$, a feature
available only when $p=2$, while
\cite{Franca2004,Franca2008,Franca2009,DFS2026} treat general $p$ but without
Hardy term. Moreover, even in the latter works the range of $p$ is restricted:
the term $y_{l}|y_{l}|^{\frac{2-p}{p-1}}$ in \eqref{eq:francasystem} is only
H\"older continuous on $\{y_{l}=0\}$ when $p>2$, whence the standing
assumptions $1<p\le2$ in \cite[\S2]{Franca2008} and
$\frac{2N}{2+N}\le p\le2$ in \cite[Rem.~7]{DFS2026}. The slope variables used
below remove this restriction altogether. To the best of our knowledge the
quasilinear equation with a Hardy potential, $\mu>0$ and $p\ne2$, has been
analysed only at the critical
exponent $q=p^{*}_{s}-1$ and for finite-energy solutions
\cite{AFP,Xiang2015,Xiang2017,OSV}; the survey in the introduction of
\cite{BidautVeronChen} confirms this. Filling this gap for the whole range
$q>p-1$ is the purpose of the present paper. We note that the equation with an
\emph{absorption} term,
$-\Delta_{p}u+\mu\frac{u^{p-1}}{|x|^{p}}+|x|^{\theta}u^{q}=0$, has just received
a complete treatment by Bidaut-V\'eron and Chen \cite{BidautVeronChen}. There
$\mu$ ranges over all of $\R$, and our Hardy coefficient corresponds to their
$-\mu$; up to that convention, the two equations differ precisely by the sign in
front of the power term. That change of sign alters the dynamics substantially,
and our results are not a corollary of theirs.

\subsection{Method, and what is new}

The obstruction to transferring the results of \cite{FrancaSfecci} to $p\ne2$ is
not a matter of technique but of structure, and it is twofold. First, in the
Fowler variables \eqref{eq:fowlervars} the Hardy term enters the second equation
linearly when $p=2$, so that the linear part of the system is a genuine
$2\times2$ matrix whose eigenvalues, exponential dichotomies and invariant
manifolds can be computed; for $p\ne2$ this linearity is destroyed. Second, in
those same variables the first equation contains
$y_{l}|y_{l}|^{\frac{2-p}{p-1}}$, which for $p>2$ is only H\"older continuous
where $y_{l}=0$; this is precisely why the quasilinear results of
\cite{Franca2008,DFS2026} carry the restriction $p\le2$. The customary
alternative
reduction, $w(t)=r^{\gs}v(r)$, $t=\ln r$, leads to
\[
\big(\Phi(w'-\gs w)\big)'+\big(N-p-\gs(p-1)\big)\Phi(w'-\gs w)
+\mu|w|^{p-2}w+|w|^{q-1}w=0,\qquad \Phi(z)=|z|^{p-2}z ,
\]
and writing this as a planar system again requires inverting $\Phi$, with the
same loss of regularity: the standard stable manifold and centre manifold
theorems do not apply when $p>2$.

We therefore work with the \emph{slope variable} $x=-rv'/v$, in the spirit of
Bidaut-V\'eron and Giacomini \cite{BVG} and of \cite{BidautVeronChen}, and set
\begin{equation}\label{eq:xY}
x(t):=-\frac{r\,v'(r)}{v(r)},\qquad
Y(t):=r^{p+s}v(r)^{q-p+1},\qquad t=\ln r ,
\end{equation}
so that \eqref{eq:ode} becomes (Lemma~\ref{lem:system})
\begin{equation}\label{eq:system}
\dot x=\frac{x^{2-p}}{p-1}\big[\mu+Y-h(x)\big],
\qquad
\dot Y=(q-p+1)\,Y\,\big[\gs-x\big] .
\end{equation}
We write $\mathbf F=(F,G)$ for the associated vector field, i.e.
\begin{equation}\label{eq:FG}
F(x,Y)=\frac{x^{2-p}}{p-1}\big[\mu+Y-h(x)\big],
\qquad
G(x,Y)=(q-p+1)\,Y\,(\gs-x).
\end{equation}
System \eqref{eq:system} is real-analytic on the open half-plane
$(0,+\infty)\times\R$ for \emph{every} $p>1$: the only possible singularity is the
factor $x^{2-p}$, which is analytic for $x>0$. In particular no restriction of
the type $p\le2$ is needed here, in contrast with
\cite[\S2]{Franca2008} and \cite[Rem.~7]{DFS2026}; the change of unknown from
$y_{l}$ to the slope $x$ is what removes it. All our results therefore hold on
the whole admissible range $1<p<N$, covering the singular case $p<2$ and the
degenerate case $p>2$ on an equal footing.

The price to pay is that the slope variables are adapted to \emph{positive,
decreasing} solutions, since $x=-rv'/v$ is defined only where $v>0$; this is
harmless for the questions studied here, since Theorem~\ref{thm:apriori} shows
that the monotonicity is automatic. (It is the source-case analogue
of system (4.8) of \cite{BidautVeronChen}, written there in terms of
$x^{p-1}$.) All the classical local theory is therefore available without any
restriction on $p$.

The equilibria of \eqref{eq:system} in $\{x>0,\ Y\ge0\}$ are
\begin{equation}\label{eq:equilibria}
P_{*}:=\big(\gs,\ h(\gs)-\mu\big)\ \ (\text{only if } h(\gs)>\mu),
\qquad
P_{-}:=(\nu_{-},0),\qquad P_{+}:=(\nu_{+},0),
\end{equation}
and they correspond exactly to the three decay rates $\gs$, $\nu_{-}$,
$\nu_{+}$. Figure~\ref{fig:portraits} shows the resulting phase portraits in
the four regimes \eqref{eq:regimes}.

\begin{figure}[ht]
\centering
\includegraphics[width=\textwidth]{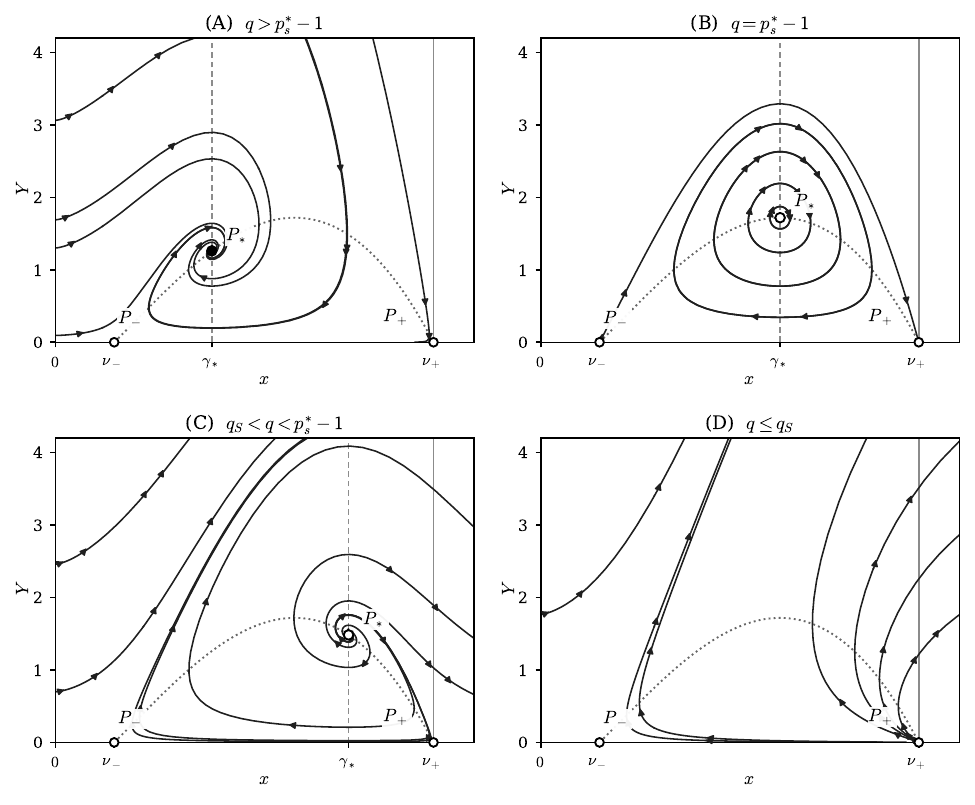}
\caption{Phase portraits of \eqref{eq:system} in the quadrant
$\{x>0,\ Y>0\}$, for $N=6$, $p=2.5$, $s=0.7$, $\mu=0.6$ and, respectively,
$q=5$, $q=p^{*}_{s}-1$, $q=\tfrac12(q_{S}+p^{*}_{s}-1)$ and
$q=p-1+0.55\,(q_{S}-p+1)$. Dotted: the nullcline $Y=h(x)-\mu$; dashed: the
nullcline $x=\gs$; the solid vertical line is the barrier $x=\nu_{+}$ of
Theorem~\ref{thm:apriori}. In {\rm(A)} the interior equilibrium $P_{*}$ is
asymptotically stable and $P_{+}$ is a saddle; in {\rm(B)} $P_{*}$ is a centre,
surrounded by the Fowler--Delaunay periodic orbits of
Theorem~\ref{thm:critical}{\rm(II)} and enclosed by the heteroclinic orbit
$\Gamma_{0}$ joining $P_{-}$ to $P_{+}$; in {\rm(C)} $P_{*}$ is a repeller and
the only orbit converging to an equilibrium is the stable manifold of $P_{+}$;
in {\rm(D)} one has $\gs>\nu_{+}$, $P_{*}$ has disappeared, and every orbit
crosses $x=\nu_{+}$, so that the corresponding solution $v$ vanishes at a
finite radius. The orbits were obtained by numerical integration of
\eqref{eq:system}.}
\label{fig:portraits}
\end{figure}

What replaces the linear-algebraic machinery of \cite{FrancaSfecci} is the
following. First, a universal a priori barrier $x<\nu_{+}$
(Theorem~\ref{thm:apriori}), whose violation forces $v$ to vanish at a finite
radius. Second, and this is the main new tool, the explicit function
\eqref{eq:B}, which is a Dulac function when $q\ne p^{*}_{s}-1$ and an
integrating factor when $q=p^{*}_{s}-1$: its divergence
\eqref{eq:divB} keeps a strict sign on the whole quadrant in the former case,
so that periodic orbits and separatrix cycles are excluded and
Poincar\'e--Bendixson applies, and vanishes identically in the latter, making
the system integrable. It is worth stressing that the exponent at which this
divergence changes sign is exactly $p^{*}_{s}-1$; the single function $B$ thus
encodes, in the quasilinear weighted setting, the trichotomy
unstable / centre / stable found in \cite[\S2.1]{FrancaSfecci} for $p=2$.
Third, an explicit Lyapunov function (Theorem~\ref{thm:lyap}) which, unlike the
qualitative invariant-manifold arguments, produces a \emph{quantitative} basin
of attraction for the self-similar profile. All the leading constants are
computed explicitly.

\subsection{Main results}

Our first result combines two facts: every positive solution in an exterior
domain is eventually decreasing and tends to $0$, and its slope obeys a
universal a priori bound. The latter is the exact analogue, in the present
setting, of the elementary fact that a positive solution cannot decay faster
than any power without vanishing.

\begin{theorem}\label{thm:apriori}
Assume \eqref{eq:standing} and let $v>0$ solve \eqref{eq:ode} on
$(R_{0},+\infty)$. Then
\begin{equation}\label{eq:auto}
v'(r)<0\ \text{ for all large }r,
\qquad
\lim_{r\to+\infty}v(r)=0 ,
\end{equation}
and, $(x,Y)$ being as in \eqref{eq:xY}, there is $t_{0}\in\R$ such that
\begin{equation}\label{eq:apriori}
0<x(t)<\nu_{+}\ \text{ for every }t\ge t_{0},
\qquad \liminf_{t\to\infty}x(t)>0,
\qquad \sup_{t\ge t_{0}}Y(t)<+\infty .
\end{equation}
\end{theorem}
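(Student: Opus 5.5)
The plan has three steps. We use the flux $w(r):=r^{N-1}|v'|^{p-2}v'$, for which \eqref{eq:ode} reads $w'=-\mu r^{N-1-p}v^{p-1}-r^{N-1+s}v^{q}<0$. We first prove \eqref{eq:auto} by integral comparison, then the barrier $x<\nu_{+}$ by a blow-up argument for \eqref{eq:system}, and finally the last two bounds in \eqref{eq:apriori} from a single integral estimate on $w$.

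\emph{Step 1: monotonicity and decay.} Since $w$ is strictly decreasing, either $w>0$ on $(R_{0},\infty)$ or $w<0$ eventually. In the first case $v$ is increasing, so $v\ge c>0$ on some $(R_{1},\infty)$. Then $w'\le -c^{q}r^{N-1+s}$, and integrating gives $w(r)\le w(R_{1})-c'(r^{N+s}-R_{1}^{N+s})\to-\infty$ because $N+s>N-p>0$. This contradicts $w>0$, so $v'<0$ eventually and $v\downarrow L\ge0$. If $L>0$, the same integration gives $w(r)\le -c r^{N+s}$ for large $r$, that is $v'\le -c r^{(1+s)/(p-1)}$. Since $s>-p$ we have $\frac{1+s}{p-1}>-1$, so $v\to-\infty$, a contradiction. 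Hence $L=0$, and we choose $t_{0}$ so that $x>0$ for $t\ge t_{0}$.

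\emph{Step 2: the barrier $x<\nu_{+}$.} We work with \eqref{eq:system}, taking Lemma~\ref{lem:system} as given. The key fact is that $h(x)\le\mu$ for $x\ge\nu_{+}$, so $\dot x\ge \frac{x^{2-p}}{p-1}Y>0$ there. Suppose $x(t_{1})\ge\nu_{+}$ for some $t_{1}\ge t_{0}$. Then $x$ is strictly increasing from $t_{1}$ on, and there is $t_{2}>t_{1}$ with $x(t_{2})=:x_{2}>\nu_{+}$. For $t\ge t_{2}$ we get $\dot x\ge f(x):=\frac{x^{2-p}}{p-1}(\mu-h(x))$. The function $f$ is positive and continuous on $[x_{2},\infty)$. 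Moreover, since $-h(x)\sim (p-1)x^{p}$ as $x\to\infty$, we have $f(x)\sim x^{2}$, so $\int_{x_{2}}^{\infty}dx/f(x)<\infty$. Therefore $x$ blows up at a finite time $T$. However, $x=-rv'/v$ is finite for every $r>R_{0}$, because $v>0$ and $v\in C^{1}$. This is a contradiction, so $0<x<\nu_{+}$ for all $t\ge t_{0}$. The only delicate points are the strict inequality at $x=\nu_{+}$, which is handled by $Y>0$, and the behaviour of $f$ as $x$ passes $\frac{N-p}{p-1}$; neither causes trouble.

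\emph{Step 3: $\liminf x>0$ and $Y$ bounded.} Fix $\rho$ with $v'<0$ on $[\rho,\infty)$. For $r\ge2\rho$ we integrate $-w'$ over $[r/2,r]$ and use that $v$ is decreasing, which gives
\[
-w(r)\ge -w(r/2)-w(r)+w(r/2)\ge v(r)^{p-1}\mu\!\int_{r/2}^{r}t^{N-1-p}dt+v(r)^{q}\!\int_{r/2}^{r}t^{N-1+s}dt .
\]
Here $-w(r/2)>0$, and both integrals are $\ge c\,r^{N-p}$ and $\ge c\,r^{N+s}$ respectively, since $N-p>0$ and $N+s>0$. Dividing by $r^{N-1}v^{p-1}$ gives
\[
x^{p-1}=\frac{r^{p-1}|v'|^{p-1}}{v^{p-1}}\ \ge\ c\mu+c\,r^{p+s}v^{q-p+1}=c\mu+cY .
\]
The first term yields $\liminf x\ge(c\mu)^{1/(p-1)}>0$. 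The second, combined with Step 2, yields $Y\le\nu_{+}^{p-1}/c$, and we enlarge $t_{0}$ if necessary. I expect Step 2 to be the main obstacle, though it is mild: one must make sure the finite-time blow-up of $x$ is genuinely incompatible with the definition of a solution, which holds because $v>0$ on the whole of $(R_{0},\infty)$.
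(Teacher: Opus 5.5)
Your proof is correct. It matches the paper on the monotonicity/decay and barrier steps and departs from it mainly in the last step.

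\emph{Monotonicity and decay.} For \eqref{eq:auto}, the paper (Lemma~\ref{lem:auto}) runs the same flux argument but integrates the Hardy term. That uses $\mu>0$ and $N>p$, and when $L>0$ it gives only $-v'\ge Cr^{-1}$ and a logarithmic divergence. You integrate the source term instead, which needs $N+s>0$ and $\frac{1+s}{p-1}>-1$ but not $\mu>0$.

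\emph{Barrier.} Your argument for $x<\nu_{+}$ is essentially identical to the paper's Step~2.

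\emph{The last two bounds.} Here the routes genuinely differ. The paper stays inside the planar system:
\begin{itemize}
\item $\liminf x>0$ comes from forward invariance of $\{x>\eta\}$, where $\eta=\min\{x(t_{0}),\delta\}$ and $\delta<\nu_{-}$, since $\dot x>0$ on $\{0<x\le\delta\}$;
\item $Y$ is bounded because $\dot x\ge\vartheta\mu$ on $\{Y\ge\mub\}$ and $x$ is trapped in $[\eta,\nu_{+}]$, so each excursion into $\{Y\ge\mub\}$ lasts at most $(\nu_{+}-\eta)/(\vartheta\mu)$, while $\dot Y\le c\gs Y$ limits the growth of $Y$ during it.
\end{itemize}
The resulting bounds depend on the orbit through $x(t_{0})$ and $Y(t_{0})$.

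Your integration of $-w'$ over $[r/2,r]$ uses only that $v$ decreases. It gives the pointwise inequality $x^{p-1}\ge C(\mu+Y)$, with $C>0$ depending only on $N,p,s$. Once $x<\nu_{+}$ is known, both conclusions follow in one line, with the solution-independent bounds $x\ge(C\mu)^{1/(p-1)}$ and $Y\le\nu_{+}^{p-1}/C$ for $r\ge2\rho$.

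Your route is shorter and quantitatively stronger. The paper's has the merit of working purely with \eqref{eq:system}, in the same spirit as the rest of its analysis.

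One cosmetic point: your displayed chain is garbled, since its middle term equals $-w(r)$. It should read $-w(r)=-w(r/2)+\int_{r/2}^{r}(-w')\,dt\ge\int_{r/2}^{r}(-w')\,dt$.
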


The strict bound $x<\nu_{+}$ is sharp: it is attained in the limit along the
fast-decay branch. It says that $\nu_{+}$ is a genuine barrier, crossing which
forces $v$ to vanish at a finite radius (Remark~\ref{rem:barrier}). Note that \eqref{eq:auto} uses only
the presence of the source terms on the right-hand side of \eqref{eq:ode}; it
lets us dispense with any decay assumption, so that all the results below are
stated for \emph{arbitrary} positive solutions in an exterior domain.

The second result is the key structural fact. Recall that a positive $C^{1}$
function $B$ is a \emph{Dulac function} for a vector field $\mathbf F$ on an
open set $U\subset\R^{2}$ if $\dv(B\mathbf F)$ has a strict sign on $U$, and an
\emph{integrating factor} if $\dv(B\mathbf F)\equiv0$ on $U$. The following
single function $B$ plays both roles, according to the regime.

\begin{theorem}\label{thm:dulac}
Assume \eqref{eq:standing} and set
\begin{equation}\label{eq:B}
B(x,Y):=x^{p-2}\,Y^{\,b},\qquad b:=\frac{N-p}{p+s}-1\ (>-1).
\end{equation}
Then, denoting by $\mathbf F$ the vector field of \eqref{eq:system},
\begin{equation}\label{eq:divB}
\dv\big(B\mathbf F\big)(x,Y)
=\Big(p-\frac{N-p}{\gs}\Big)\,x^{p-1}\,Y^{\,b}
\qquad\text{for all }x>0,\ Y>0 ,
\end{equation}
whose sign is negative in regime {\rm(A)}, zero in regime {\rm(B)} and positive
in regimes {\rm(C)}--{\rm(D)}. Consequently:
\begin{enumerate}
\item[\rm(i)] if $q\ne p^{*}_{s}-1$, system \eqref{eq:system} admits neither a
periodic orbit nor a separatrix cycle in $\{x>0,\ Y\ge0\}$, and every solution
with $x>0$, $Y>0$ satisfying \eqref{eq:apriori} converges, as $t\to+\infty$, to
one of the equilibria \eqref{eq:equilibria};
\item[\rm(ii)] if $q=p^{*}_{s}-1$, the field $B\mathbf F$ is Hamiltonian on
$\{x>0,\ Y>0\}$: setting
\begin{equation}\label{eq:firstintegral}
\mathcal H(x,Y):=\frac{Y^{\,b+1}}{p-1}
\left[\frac{\mu-h(x)}{b+1}+\frac{Y}{b+2}\right],
\qquad b+1=\frac{p}{q-p+1},
\end{equation}
one has $\partial_{Y}\mathcal H=BF$ and $\partial_{x}\mathcal H=-BG$, so
$\mathcal H$ is a first integral of \eqref{eq:system}.
\end{enumerate}
\end{theorem}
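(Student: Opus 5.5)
The whole statement rests on the identity \eqref{eq:divB}, which I will prove first by direct differentiation. On $\{x>0,\,Y>0\}$ the factor $x^{p-2}$ in $B$ cancels the factor $x^{2-p}$ in $F$, so $BF=\frac{Y^{b}}{p-1}\,[\mu+Y-h(x)]$. Hence $\partial_x(BF)=-\frac{Y^{b}}{p-1}h'(x)$. Since
\[
h'(x)=(p-1)x^{p-2}\big(N-p-px\big),
\]
this gives $\partial_x(BF)=-x^{p-2}Y^{b}(N-p-px)$. On the other side, $BG=(q-p+1)x^{p-2}Y^{b+1}(\gs-x)$, so $\partial_Y(BG)=(b+1)(q-p+1)x^{p-2}Y^{b}(\gs-x)$. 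By the definitions of $b$ and $\gs$ we have $(b+1)(q-p+1)=\frac{N-p}{\gs}$. Adding the two partial derivatives, the constants $\pm(N-p)$ cancel and one obtains exactly $(p-\frac{N-p}{\gs})x^{p-1}Y^{b}$. The sign statement then follows from \eqref{eq:regimes}: the factor $p-\frac{N-p}{\gs}$ is negative, zero or positive according as $\gs<$, $=$ or $>\frac{N-p}{p}$.

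For (ii), I will check $\partial_Y\mathcal H=BF$ and $\partial_x\mathcal H=-BG$ directly. At $q=p^*_s-1$ we have $\gs=\frac{N-p}{p}$, so $b+1=\frac{N-p}{\gs(q-p+1)}=\frac{p}{q-p+1}$, which is the identity recorded in \eqref{eq:firstintegral}. The $Y$-derivative of $\mathcal H$ is immediate. For the $x$-derivative, $\partial_x\mathcal H=-\frac{Y^{b+1}}{(p-1)(b+1)}h'(x)=-\frac{x^{p-2}Y^{b+1}(N-p-px)}{b+1}$. Using $N-p=p\gs$ and $\frac{p}{b+1}=q-p+1$, this equals $-(q-p+1)x^{p-2}Y^{b+1}(\gs-x)=-BG$. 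Consequently $\frac{d}{dt}\mathcal H=\partial_x\mathcal H\,F+\partial_Y\mathcal H\,G=-BGF+BFG=0$, so $\mathcal H$ is a first integral.

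For (i), I will first treat the exclusion of cycles.
\begin{itemize}
\item A periodic orbit or separatrix cycle inside the open quadrant is excluded by the classical Bendixson--Dulac argument. By Green's theorem, the integral of $\dv(B\mathbf F)$ over the enclosed region equals the flux of $B\mathbf F$ through the boundary curve, which vanishes because $\mathbf F$ is tangent to it. This contradicts the strict sign of $\dv(B\mathbf F)$.
\item The half-line $\{Y=0\}$ is invariant, since $G=0$ there. On it the flow is one-dimensional, with equilibria $P_\pm$, so no periodic orbit lies on it or crosses it.
\item What remains is a separatrix cycle in the closed quadrant that contains the segment $[\nu_-,\nu_+]\times\{0\}$ together with an orbit in $\{Y>0\}$ joining $P_-$ and $P_+$. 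For this case I will apply Green's theorem on the truncated region $\{Y>\eps\}$ and let $\eps\to0$. Because $b>-1$, $Y^{b}$ is integrable near $Y=0$, so the area integral converges and keeps its strict sign. The flux across the segment $\{Y=\eps\}$ is $\int BG\,dx$, and $|BG|\le C\eps^{b+1}\to0$ uniformly for $x$ in the compact range $[\nu_-,\nu_+]$. The boundary orbit contributes no flux, so again we get a contradiction.
\end{itemize}
Handling this singular weight at $Y=0$ is the step I expect to need the most care, together with checking that the truncated regions are well behaved near the corners $P_\pm$. I plan to use the fact that the connecting orbit is a graph over $x$ near $P_\pm$, or else to truncate by small discs around the corners as well.

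Finally, I will prove convergence. Take an orbit with $x,Y>0$ satisfying \eqref{eq:apriori}. Its forward half-orbit lies in the compact set $[\delta,\nu_+]\times[0,M]$, where $\delta>0$ comes from the $\liminf$ bound and $M$ from the $\sup$ bound. This set is contained in the analytic domain $\{x>0\}$ and contains at most three equilibria. The $\omega$-limit set is therefore nonempty, compact, connected and invariant. By the Poincar\'e--Bendixson theorem (for a planar field with finitely many equilibria), it is either an equilibrium, a periodic orbit, or a cycle of equilibria joined by orbits. The last two alternatives are excluded by the previous step, so the $\omega$-limit set is a single equilibrium among \eqref{eq:equilibria}, and the orbit converges to it.
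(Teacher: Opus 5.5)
Your proposal is correct and follows the paper's proof essentially step by step: the same direct computation of $\dv(B\mathbf F)$ via $(b+1)(q-p+1)=\frac{N-p}{\gs}$, the same check of $\partial_Y\mathcal H=BF$ and $\partial_x\mathcal H=-BG$, the same Dulac argument with truncation along $\{Y=\eps\}$ using $b>-1$ to make the flux $O(\eps^{b+1})$, and the same Poincar\'e--Bendixson conclusion on $[\delta,\nu_{+}]\times[0,M]$. One small omission is that your list of cycles meeting $\{Y=0\}$ leaves out those touching the axis only at $P_{\pm}$ (where $B$ is singular if $b<0$), but the same $\eps$-truncation handles them, which is why the paper applies it uniformly to every cycle that meets $\{Y=0\}$.
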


Combining Theorems~\ref{thm:apriori} and \ref{thm:dulac} with the local
analysis at each equilibrium yields the exact decay rates. This is our main
result.

\begin{theorem}[Supercritical regime]\label{thm:main}
Assume \eqref{eq:standing}, $q>p^{*}_{s}-1$, and let $v>0$ solve \eqref{eq:ode}
on $(R_{0},+\infty)$. Then exactly one of the following holds.
\begin{enumerate}
\item[\rm(I)] \emph{(Self-similar decay; possible only if $\gs>\nu_{-}$, i.e.\
$q<\bar q$.)}
\[
\lim_{r\to+\infty}r^{\gs}v(r)=\big(h(\gs)-\mu\big)^{\frac{1}{q-p+1}},
\qquad
\lim_{r\to+\infty}\frac{r\,v'(r)}{v(r)}=-\gs .
\]
\item[\rm(II)] \emph{(Slow Hardy decay; possible only if $\gs<\nu_{-}$, i.e.\
$q>\bar q$.)}
\[
\lim_{r\to+\infty}r^{\nu_{-}}v(r)=C\in(0,+\infty),
\qquad
\lim_{r\to+\infty}\frac{r\,v'(r)}{v(r)}=-\nu_{-} .
\]
\item[\rm(III)] \emph{(Fast Hardy decay.)}
\[
\lim_{r\to+\infty}r^{\nu_{+}}v(r)=C\in(0,+\infty),
\qquad
\lim_{r\to+\infty}\frac{r\,v'(r)}{v(r)}=-\nu_{+} .
\]
\item[\rm(IV)] \emph{(Degenerate case; possible only if $\gs=\nu_{-}$, i.e.\
$q=\bar q$.)}
\[
v(r)=C_{*}\,r^{-\gs}(\ln r)^{-\frac{1}{q-p+1}}\big(1+o(1)\big),
\]
\[
\text{where}\qquad
C_{*}=\left(\frac{(p-1)\big[(q-p+1)(N-p)-p(p+s)\big](p+s)^{p-2}}
{(q-p+1)^{p}}\right)^{\frac{1}{q-p+1}},
\]
and moreover
$\dfrac{r v'(r)}{v(r)}=-\gs-\dfrac{1}{(q-p+1)\ln r}+O\big((\ln r)^{-2}\big)$.
\end{enumerate}
In cases {\rm(I)}, {\rm(II)}, {\rm(III)} the convergence of $rv'/v$ is
exponential in $t=\ln r$. All four alternatives occur, for suitable values of
the parameters.
\end{theorem}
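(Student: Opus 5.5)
By Theorem~\ref{thm:apriori}, the trajectory $(x(t),Y(t))$, $t=\ln r$, eventually stays in the strip $0<x<\nu_{+}$, $Y>0$ with $\liminf x>0$ and $Y$ bounded. So its $\omega$-limit set is a nonempty compact connected invariant subset of $\{x>0,\ Y\ge0\}$. Since $q>p^{*}_{s}-1$, Theorem~\ref{thm:dulac}(i) rules out periodic orbits and separatrix cycles, hence $(x,Y)\to E$ for some equilibrium $E\in\{P_{*},P_{-},P_{+}\}$. I will then split according to $E$ and the position of $\gs$ relative to $\nu_{-}$, doing a local analysis at each equilibrium with the analytic vector field \eqref{eq:FG}.

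\emph{Case $E=P_{*}$.} This requires $\nu_{-}<\gs<\frac{N-p}{p}$. Then $x\to\gs$ and $Y\to h(\gs)-\mu$, and since $Y=(r^{\gs}v)^{q-p+1}$ this gives (I). The Jacobian at $P_{*}$ has trace $-\frac{\gs^{2-p}}{p-1}h'(\gs)<0$ and determinant $\frac{\gs^{2-p}}{p-1}(q-p+1)(h(\gs)-\mu)>0$. Hence $P_{*}$ is hyperbolic and attracting, and the convergence is exponential.

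\emph{Case $E=P_{\pm}$ with $\gs\neq\nu_{\pm}$.} Here $Y\to0$ and $x\to\nu_{\pm}$. The Jacobian is triangular with eigenvalues $-\frac{\nu^{2-p}}{p-1}h'(\nu)$ in the $x$-direction and $(q-p+1)(\gs-\nu)$ in the $Y$-direction. Since $Y>0$ must tend to $0$, we need $\gs<\nu$ unless the orbit lies on a centre manifold. For $P_{+}$ one has $\gs<\frac{N-p}{p}<\nu_{+}$ and $h'(\nu_{+})<0$, so $P_{+}$ is a saddle whose stable manifold is transverse to $\{Y=0\}$; convergence is exponential. For $P_{-}$ one has $h'(\nu_{-})>0$, so the $x$-direction is stable, and convergence with $Y>0$ forces $\gs<\nu_{-}$, making $P_{-}$ a stable node. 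Exponential convergence $x-\nu=O(e^{-\delta t})$ and $Y=O(e^{-\delta t})$ then lets me integrate $-\frac{d}{dt}\ln v=x$, which gives $\ln(r^{\nu}v)=\text{const}+\int_t^\infty O(e^{-\delta\tau})\,d\tau$. This yields $r^{\nu}v\to C\in(0,\infty)$, i.e.\ (II) and (III). When $\gs>\nu_{-}$, convergence to $P_{-}$ is excluded because $\dot Y>0$ near $P_{-}$ in that case. This also explains the restrictions written in the statement.

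\emph{Degenerate case $\gs=\nu_{-}$, $E=P_{-}$.} This is the main obstacle: the $Y$-eigenvalue vanishes, so I need a centre manifold, which is available because the field is analytic. Set $\lambda:=\frac{\nu_{-}^{2-p}}{p-1}h'(\nu_{-})>0$ and $k:=q-p+1$. The centre manifold is a graph $x=\nu_{-}+\lambda^{-1}\frac{\nu_{-}^{2-p}}{p-1}Y+O(Y^{2})$, and the reduced equation is $\dot Y=-kc\,Y^{2}+O(Y^{3})$ with $c=\frac{\nu_{-}^{2-p}}{(p-1)\lambda}=1/h'(\nu_{-})>0$. Orbits with $Y>0$ converging to $P_{-}$ are attracted to the centre manifold exponentially, so $Y(t)=\frac{1}{kc\,t}\big(1+O(\frac{\ln t}{t})\big)$. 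This gives
\[
x=\gs+\frac{1}{k t}+O(t^{-2}),\qquad r^{\gs}v=Y^{1/k}\sim (kc\,t)^{-1/k}.
\]
It then remains to verify algebraically that $C_{*}^{\,k}=h'(\nu_{-})/k$ when $\gs=\nu_{-}$. Using $h'(\gs)=\gs^{p-2}(p-1)[N-p-p\gs]$ and $\gs=\frac{p+s}{k}$, this becomes $\frac{(p-1)(p+s)^{p-2}[k(N-p)-p(p+s)]}{k^{p}}$, which matches the statement. The $O(t^{-2})$ error in $x$ will require careful tracking of the second-order centre manifold terms, and is the step I expect to demand the most care.

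Finally, the four alternatives are mutually exclusive because the limits of $rv'/v$ are different. For existence, (I) is realized by $A_{*}r^{-\gs}$ itself, and (III) by the stable manifold of $P_{+}$, which enters $\{Y>0\}$. Alternative (II) occurs for $q>\bar q$ through an open set of orbits near the stable node $P_{-}$, and (IV) occurs for $q=\bar q$ via orbits of the centre-manifold branch on the side $Y>0$. Each of these orbits lifts back to a positive solution $v$ through \eqref{eq:xY}.
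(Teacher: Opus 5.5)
Your route is the paper's: Theorem~\ref{thm:apriori} and the Dulac function give convergence to $P_{*}$, $P_{-}$ or $P_{+}$. You then do a local analysis at each equilibrium (hyperbolic at $P_{*}$, at $P_{+}$, and at $P_{-}$ when $\gs<\nu_{-}$; centre manifold when $\gs=\nu_{-}$) and recover $r^{\nu}v$ by integrating $\frac{d}{dt}\ln v=-x$. Excluding $P_{-}$ for $\gs>\nu_{-}$ via $\dot Y=kY(\gs-x)>0$ near $P_{-}$ is a slightly more elementary substitute for the paper's stable-manifold argument, and is fine. A minor point: exclusivity does not follow from the limits of $rv'/v$ alone. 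These are $-\gs$ in (I) and (IV) and $-\nu_{-}$ in (II), which coincide when $\gs=\nu_{-}$. Exclusivity comes from the restrictions $\gs\gtrless\nu_{-}$, i.e.\ from which equilibrium is the limit.

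The genuine gap is the remainder in (IV), the step you flag yourself, and it cannot be closed. Your line ``this gives $x=\gs+\frac{1}{kt}+O(t^{-2})$'' does not follow from $Y=\frac{1}{kct}\big(1+O(\frac{\ln t}{t})\big)$. Inserting that into $x-\gs=\psi(Y)=cY+O(Y^{2})$ gives only $O(\ln t/t^{2})$, and the second-order tracking you plan shows this term is really present. Write $\psi(Y)=cY+\psi_{2}Y^{2}+O(Y^{3})$. Then the invariance equation at order $Y^{2}$ gives
\[
\lambda\,\psi_{2}=c^{2}\Big(k-\tfrac12\,\ell_{0}\,h''(\gs)\Big),\qquad \ell_{0}=\frac{\gs^{2-p}}{p-1}.
\]
The derivative of $x^{2-p}$ drops out because it multiplies a quantity that is $O(Y^{2})$. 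Next, $\frac{d}{dt}Y^{-1}=k\psi(Y)/Y=kc+k\psi_{2}Y+O(Y^{2})$, so $Y^{-1}=kct+\frac{\psi_{2}}{c}\ln t+O(1)$, and therefore
\[
x-\gs=\frac{1}{kt}-\frac{\psi_{2}}{k^{2}c^{2}}\,\frac{\ln t}{t^{2}}+O(t^{-2}).
\]
For $1<p\le 2$ one has $h''<0$, hence $\psi_{2}>0$; for $p=2$, $\psi_{2}=(k+1)/\lambda^{3}$. A concrete instance inside the hypotheses is $N=3$, $p=2$, $s=0$, $\mu=\frac{3}{16}$, $q=9$. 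There $\gs=\nu_{-}=\frac14$ and $x-\gs=\frac{1}{8t}-\frac{9}{32}\frac{\ln t}{t^{2}}+O(t^{-2})$.

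So $rv'/v=-\gs-\frac{1}{k\ln r}+\frac{\psi_{2}}{k^{2}c^{2}}\frac{\ln\ln r}{(\ln r)^{2}}+O\big((\ln r)^{-2}\big)$. The claimed remainder $O\big((\ln r)^{-2}\big)$ is false whenever $\psi_{2}\neq0$; what can be proved is $O\big(\ln\ln r/(\ln r)^{2}\big)$. The paper's own proof has the same defect: it passes from $y=\frac{a}{c\ell_{0}t}(1+o(1))$ directly to $\xi=\frac{1}{ct}+O(t^{-2})$. The rest of (IV) is correct, including $v=C_{*}r^{-\gs}(\ln r)^{-1/k}(1+o(1))$ and your check that $C_{*}^{k}=h'(\nu_{-})/k$.
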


In the range $\gs>\nu_{-}$, which is the generic one, the four alternatives of
Theorem~\ref{thm:main} reduce to the two rates $\gs$ and $\nu_{+}$, and these
are separated by the value of $\lim_{r\to\infty} r^{\gs}v(r)$:

\begin{corollary}\label{cor:H}
Assume \eqref{eq:standing} and $p^{*}_{s}-1<q<\bar q$ (equivalently
$\gs>\nu_{-}$ in regime {\rm(A)}), and let $v$ be as in
Theorem~\ref{thm:main}. Then
\[
\lim_{r\to+\infty}r^{\gs}v(r)=0
\iff
\lim_{r\to+\infty}r^{\nu_{+}}v(r)\in(0,+\infty) ,
\]
and in that case
\[
\int^{\infty}\!r^{N-1}|v'|^{p}\,dr<\infty,\quad
\int^{\infty}\!r^{N-1-p}v^{p}\,dr<\infty,\quad
\int^{\infty}\!r^{N-1+s}v^{q+1}\,dr<\infty .
\]
\end{corollary}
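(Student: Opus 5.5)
The proof reduces to Theorem~\ref{thm:main}. Since $\gs>\nu_{-}$, alternatives (II) and (IV) are excluded, so every solution satisfies either (I) or (III). In case (I), $r^{\gs}v(r)\to(h(\gs)-\mu)^{1/(q-p+1)}>0$, because $h(\gs)>\mu$ by \eqref{eq:trichotomycases} when $\nu_{-}<\gs<\nu_{+}$; here $\gs<\frac{N-p}{p}<\nu_{+}$ in regime (A). In case (III), $r^{\gs}v(r)=r^{\gs-\nu_{+}}\cdot r^{\nu_{+}}v(r)\to0$, since $\gs<\nu_{+}$. The two alternatives are exclusive, so the limit of $r^{\gs}v$ is zero exactly in case (III). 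This gives the equivalence: the forward direction rules out (I), and the backward direction is the computation just made.

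For the integrability statements we work in case (III). The bounds $v\le Cr^{-\nu_{+}}$ and $|v'|=x v/r\le C' r^{-\nu_{+}-1}$ hold for large $r$, using $0<x<\nu_{+}$ from Theorem~\ref{thm:apriori}, or simply $x\to\nu_{+}$. This gives
\[
r^{N-1}|v'|^{p}+r^{N-1-p}v^{p}\le C\,r^{N-1-p-p\nu_{+}},
\]
which is integrable at infinity because $p\nu_{+}>N-p$, that is, $\nu_{+}>\frac{N-p}{p}$ by \eqref{eq:nupm}.

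For the source term we have
\[
r^{N-1+s}v^{q+1}\le C r^{N-1+s-(q+1)\nu_{+}},
\]
and we need $(q+1)\nu_{+}>N+s$. We use $q+1>p^{*}_{s}=\frac{p(N+s)}{N-p}$ together with $\nu_{+}>\frac{N-p}{p}$. This gives
\[
(q+1)\nu_{+}>\frac{p(N+s)}{N-p}\cdot\frac{N-p}{p}=N+s,
\]
where $N+s>0$ because $s>-p>-N$. This finishes the argument.

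The only point that needs care is checking that the exponent inequalities are strict. They are, since both $\nu_{+}>\frac{N-p}{p}$ and $q>p^{*}_{s}-1$ are strict. Everything else follows directly from Theorem~\ref{thm:main}.
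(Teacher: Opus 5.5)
Your proposal is correct and follows the paper's proof essentially line by line. You use $\gs>\nu_{-}$ to restrict to alternatives (I) and (III) of Theorem~\ref{thm:main}, distinguish them by the limit of $r^{\gs}v$, and check the three integrability exponents via $p\nu_{+}>N-p$ and $(q+1)\nu_{+}>N+s$; your extra remarks (bounding $x$ by $\nu_{+}$ through Theorem~\ref{thm:apriori}, and the positivity needed to multiply strict inequalities) are harmless refinements of the same argument.
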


In the remaining regimes the picture changes completely, and this is the second
half of our results. In the critical regime the system becomes integrable and a
one-parameter family of \emph{Delaunay-type} solutions appears; in the
subcritical regime the self-similar profile turns from an attractor into a
repeller, so that only two orbits survive; and below the Serrin exponent no
solution survives at all.

\begin{theorem}[Critical regime]\label{thm:critical}
Assume \eqref{eq:standing} and $q=p^{*}_{s}-1$, so that $\gs=\frac{N-p}{p}$ and
$Y_{*}=\mub-\mu>0$. Let $v>0$ solve \eqref{eq:ode} on $(R_{0},+\infty)$ and let
$\mathcal H$ be as in \eqref{eq:firstintegral}. Then
$\mathcal H$ is constant along the orbit, its value $\kappa$ satisfies
$\mathcal H(P_{*})\le\kappa\le0$, and exactly one of the following holds.
\begin{enumerate}
\item[\rm(I)] $\kappa=\mathcal H(P_{*})$ and $v(r)=A_{*}\,r^{-\gs}$ for every
$r>R_{0}$, with $A_{*}=(\mub-\mu)^{1/(q-p+1)}$.
\item[\rm(II)] $\mathcal H(P_{*})<\kappa<0$ and the orbit is a periodic orbit
of \eqref{eq:system} surrounding $P_{*}$. In this case $v$ extends to a
positive solution of \eqref{eq:ode} on all of $(0,+\infty)$ and
\[
r^{\gs}v(r)=\Psi(\ln r),\qquad r>0,
\]
where $\Psi$ is a positive, non-constant, $T$-periodic function for some
$T=T(\kappa)>0$. In particular $\limsup_{r\to\infty}r^{\gs}v(r)$ and
$\liminf_{r\to\infty}r^{\gs}v(r)$ are distinct positive numbers, so $v$ has no
exact decay rate.
\item[\rm(III)] $\kappa=0$ and the orbit is the heteroclinic connection from
$P_{-}$ to $P_{+}$ lying on the curve
$Y=\frac{N+s}{N-p}\big(h(x)-\mu\big)$, $x\in(\nu_{-},\nu_{+})$. In this case
$v$ extends to a positive solution of \eqref{eq:ode} on all of $(0,+\infty)$,
and the extension satisfies
\[
\lim_{r\to+\infty}r^{\nu_{+}}v(r)\in(0,+\infty),
\qquad
\lim_{r\to0^{+}}r^{\nu_{-}}v(r)\in(0,+\infty) .
\]
\end{enumerate}
All three alternatives occur, and the orbits arising in {\rm(II)} form a
one-parameter family, indexed by the value
$\kappa\in\big(\mathcal H(P_{*}),0\big)$.
\end{theorem}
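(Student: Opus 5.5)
We will work entirely in the slope variables \eqref{eq:xY}. By Theorem~\ref{thm:apriori}, for $t\ge t_0$ the orbit $(x(t),Y(t))$ lies in the strip $0<x<\nu_{+}$, $Y>0$, with $\liminf x>0$ and $Y$ bounded. By Theorem~\ref{thm:dulac}(ii), $\mathcal H$ is constant along it, equal to some $\kappa$.

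\textbf{Step 1: level sets of $\mathcal H$.} Write $\mathcal H=\frac{Y^{b+1}}{p-1}\phi_x(Y)$ with $\phi_x(Y)=\frac{\mu-h(x)}{b+1}+\frac{Y}{b+2}$. The level $\mathcal H=0$ in $\{Y>0\}$ is exactly the curve $Y=\frac{b+2}{b+1}(h(x)-\mu)$, which is nonempty only for $x\in(\nu_-,\nu_+)$. For $q=p^*_s-1$ we have $\gs=\frac{N-p}{p}$, so $b+1=\frac{N-p}{p+s}$ and $\frac{b+2}{b+1}=\frac{N+s}{N-p}$; this gives the stated curve. For fixed $x$, the map $Y\mapsto\mathcal H$ has derivative $\frac{Y^b}{p-1}(\mu+Y-h(x))$. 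Hence its minimum over $Y>0$ is attained at $Y=h(x)-\mu$ when $h(x)>\mu$, and $\mathcal H>0$ for all $Y>0$ when $h(x)\le\mu$. Since $\partial_x\mathcal H=-BG$ vanishes only at $x=\gs$ inside the quadrant, the global minimum of $\mathcal H$ on $\{Y>0\}$ is $\mathcal H(P_*)<0$, attained only at $P_*$.

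\textbf{Step 2: the case $\kappa>0$ is impossible.} Suppose $\kappa>0$. The level set $\{\mathcal H=\kappa\}$ inside the strip is a graph $Y=\eta(x)$ over all of $(0,\nu_+]$, because $\phi_x(Y)\to+\infty$ monotonically beyond the minimum; it is the upper branch. Along it, where $Y$ is large, $\dot x=\frac{x^{2-p}}{p-1}(\mu+Y-h)>0$; more precisely $\mu+Y-h(x)>0$ on the upper branch. So $x$ is strictly increasing and the orbit cannot converge to an equilibrium, since none lies on this level. It follows that $x$ reaches $\nu_+$ in finite time, contradicting \eqref{eq:apriori}. Therefore $\mathcal H(P_*)\le\kappa\le0$.

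\textbf{Step 3: the three alternatives.} If $\kappa=\mathcal H(P_*)$, the orbit is the equilibrium $P_*$. Then $x\equiv\gs$, and $Y\equiv Y_*$ gives $v=A_*r^{-\gs}$; uniqueness of the ODE solution extends this to all $r>R_0$.

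If $\mathcal H(P_*)<\kappa<0$, the level set is a compact curve in $\{\nu_-<x<\nu_+,\ Y>0\}$. On each vertical line it has exactly two points, one on each side of $Y=h(x)-\mu$, and it contains no equilibrium. It is therefore a closed regular orbit, hence periodic with some period $T$. In this case $x$ and $Y$ are defined and periodic for all $t\in\R$, so $r^{\gs}v=(Y r^{-(p+s)+\gs(q-p+1)})^{1/(q-p+1)}=Y^{1/(q-p+1)}=:\Psi(\ln r)$. Since $\Psi$ is positive for all $t$, $v$ extends to all of $(0,\infty)$: reconstruct $v=\Psi(\ln r)r^{-\gs}$, and check $v'=-xv/r$ consistently with the ODE via Lemma~\ref{lem:system}. $\Psi$ is non-constant because $Y$ varies along the orbit.

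If $\kappa=0$, the orbit lies on the arc joining $P_-$ to $P_+$. On that arc $Y>h-\mu$, so $\dot x>0$; hence $x\to\nu_+$ as $t\to+\infty$ and $x\to\nu_-$ as $t\to-\infty$, and the orbit exists for all $t$. The limits of $r^{\nu_\pm}v$ follow from exponential convergence of $x$ to $\nu_\pm$. This holds because $P_\pm$ are hyperbolic saddles: the linearization has entries $\frac{-\nu^{2-p}h'(\nu_\pm)}{p-1}$ and $(q-p+1)(\gs-\nu_\pm)$, both nonzero. Then $\ln(r^{\nu_+}v)=\int (\nu_+-x)\,dt$ converges.

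\textbf{Step 4: occurrence and the main obstacle.} All three alternatives occur, since any point of the quadrant with $\mathcal H\le 0$ generates a solution by inverting \eqref{eq:xY}, and the family in (II) is indexed by $\kappa$. The main obstacle is the topological claim in Step 1 that the sublevel set $\{\mathcal H<0\}$ is a disk and that each level $\kappa\in(\mathcal H(P_*),0)$ is a single closed curve. I will prove this from the two-branch graph structure over $x\in(x_1(\kappa),x_2(\kappa))$. Each level is then a union of graphs over $x$, and the two branches meet where $Y=h(x)-\mu$ with $\mathcal H=\kappa$. The map $x\mapsto\min_Y\mathcal H$ is strictly decreasing then increasing, since its derivative has the sign of $x-\gs$; this gives exactly two endpoints.
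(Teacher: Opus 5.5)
Your proposal is correct, but in the key step it takes a different route from the paper.

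For $\mathcal H(P_{*})<\kappa<0$ the paper argues softly. The orbit is bounded in $D$, and Poincar\'e--Bendixson produces an $\omega$-limit set. Since every equilibrium has $\mathcal H\in\{0,\mathcal H(P_{*})\}$, that set must be a periodic orbit $\Gamma$. Because $\kappa$ is a regular value, $\Gamma$ is an isolated component of the level set, so the orbit coincides with $\Gamma$. The paper then proves separately that each level carries only one periodic orbit, which is what the indexing by $\kappa$ needs: every periodic orbit meets $\{x=\gs\}$ at least twice, while $\mathcal H(\gs,\cdot)=\kappa$ has exactly two roots.

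You instead describe the level set globally. The function
$$m(x)=\mathcal H\big(x,h(x)-\mu\big)=-\frac{(h(x)-\mu)^{b+2}}{(p-1)(b+1)(b+2)}$$
is monotone on each side of $\gs$. Hence $\{\mathcal H=\kappa\}\cap\{Y>0\}$ is two graphs over $[x_{1}(\kappa),x_{2}(\kappa)]$ joined at their endpoints: a single Jordan curve through points above and below $P_{*}$, containing no equilibrium. This gives periodicity, ``surrounding $P_{*}$'', and one orbit per level in one stroke, with no Poincar\'e--Bendixson. The paper's argument is less computational and would survive if the level sets were less explicit.

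For $\kappa>0$ you also differ. The paper uses a uniform bound $\dot x\ge c_{1}$ on $[\delta,\nu_{+}]$. You use monotonicity plus the absence of equilibria on that level. A bounded increasing $x$ would force convergence to a non-equilibrium point of the graph $Y=\eta(x)$, which is impossible, so this works too.

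Three small points to tidy:
\begin{itemize}
\item ``Exactly two points on each vertical line'' holds only for $x\in(x_{1}(\kappa),x_{2}(\kappa))$, as your Step~4 in fact says.
\item The global-minimum claim in Step~1 needs the $m(x)$ argument, not merely the location of the critical point.
\item In (II) and (III), identifying the reconstructed solution with $v$ on all of $(R_{0},+\infty)$, not just where $v'<0$, uses the same uniqueness argument (Remark~\ref{rem:uniqueness}) that you invoke in (I).
\end{itemize}
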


\begin{theorem}[Subcritical regime]\label{thm:sub}
Assume \eqref{eq:standing} and $q_{S}<q<p^{*}_{s}-1$, and let $v>0$ solve
\eqref{eq:ode} on $(R_{0},+\infty)$. Then exactly one of the following holds,
the labels being those of Theorem~\ref{thm:main}.
\begin{enumerate}
\item[\rm(I)] $v(r)=A_{*}\,r^{-\gs}$ for every $r>R_{0}$, with
$A_{*}=\big(h(\gs)-\mu\big)^{1/(q-p+1)}$;
\item[\rm(III)] $\displaystyle\lim_{r\to+\infty}r^{\nu_{+}}v(r)=C\in(0,+\infty)$
and $\displaystyle\lim_{r\to+\infty}\frac{rv'(r)}{v(r)}=-\nu_{+}$.
\end{enumerate}
Both occur; the solutions of type {\rm(III)} correspond to the single orbit
given by the branch of the one-dimensional stable manifold of $P_{+}$ contained
in $\{Y>0\}$, and therefore form a one-parameter family, the parameter being
the scaling $v\mapsto\lambda^{\gs}v(\lambda\,\cdot\,)$, $\lambda>0$, under
which \eqref{eq:ode} is invariant.
\end{theorem}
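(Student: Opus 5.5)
We would prove Theorem~\ref{thm:sub} by working entirely in the phase plane of \eqref{eq:system}. Let $v>0$ solve \eqref{eq:ode} on $(R_{0},+\infty)$ and let $(x,Y)$ be given by \eqref{eq:xY}.

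\emph{Step 1: convergence to an equilibrium.} By Theorem~\ref{thm:apriori}, for $t\ge t_{0}$ the orbit stays in the strip $0<x<\nu_{+}$, satisfies $\liminf x>0$, and keeps $Y>0$ bounded. Its $\omega$-limit set is therefore a nonempty compact subset of $\{x>0,\ Y\ge0\}$. In regime (C) we have $q\ne p^{*}_{s}-1$, so Theorem~\ref{thm:dulac}(i) applies and the orbit converges to one of $P_{*}$, $P_{-}$, $P_{+}$. Note that $P_{*}$ exists because $\frac{N-p}{p}<\gs<\nu_{+}$, by \eqref{eq:trichotomycases}.

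\emph{Step 2: local analysis at each equilibrium.}

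\begin{itemize}
\item[] \textbf{At $P_{*}$.} The Jacobian has trace $F_{x}(P_{*})=-\frac{\gs^{2-p}}{p-1}h'(\gs)>0$, since $h'(\gs)<0$ for $\gs>\frac{N-p}{p}$. Its determinant is $\frac{\gs^{2-p}}{p-1}(q-p+1)Y_{*}>0$, computed from $F_{Y}G_{x}$ with $F_{Y}=\frac{x^{2-p}}{p-1}$ and $G_{x}=-(q-p+1)Y$. Hence $P_{*}$ is a hyperbolic repeller (node or focus), and the only orbit tending to it as $t\to+\infty$ is the constant one. That constant orbit gives $x\equiv\gs$ and $Y\equiv Y_{*}$, so $v=A_{*}r^{-\gs}$ on the whole interval, by analyticity and uniqueness.
\item[] \textbf{At $P_{-}$.} The Jacobian is triangular with eigenvalues $-\frac{\nu_{-}^{2-p}}{p-1}h'(\nu_{-})<0$ and $(q-p+1)(\gs-\nu_{-})>0$, so $P_{-}$ is a saddle. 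Its stable manifold is the axis $\{Y=0\}$, which is invariant because $G$ vanishes there. An orbit with $Y>0$ therefore cannot converge to $P_{-}$.
\item[] \textbf{At $P_{+}$.} The eigenvalues are $-\frac{\nu_{+}^{2-p}}{p-1}h'(\nu_{+})>0$ and $(q-p+1)(\gs-\nu_{+})<0$, so $P_{+}$ is a saddle. Its unstable manifold is the axis, and its one-dimensional stable manifold is transverse to the axis. Exactly one branch of it lies in $\{Y>0\}$, and this is the unique orbit with $Y>0$ converging to $P_{+}$.
\end{itemize}

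Since the system is analytic for $x>0$ whatever $p$ is, the stable manifold theorem applies.

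\emph{Step 3: decay rates for alternative (III).} Along the stable branch of $P_{+}$, the coordinate $x(t)\to\nu_{+}$ exponentially in $t$. This gives $rv'/v\to-\nu_{+}$. Writing $\ln(r^{\nu_{+}}v)=\int^{t}(\nu_{+}-x)$, the integral converges, so $r^{\nu_{+}}v\to C\in(0,\infty)$.

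\emph{Step 4: existence and the one-parameter family.}
\begin{itemize}
\item[] Alternative (I) occurs explicitly, via the self-similar solution.
\item[] For (III), take the orbit on the stable branch of $P_{+}$. We reconstruct $v$ from $x$ and $Y$ through $v=(Y r^{-(p+s)})^{1/(q-p+1)}$. The consistency relation $-rv'/v=x$ holds by the second equation of \eqref{eq:system}, and the first equation then gives \eqref{eq:ode} (reversing Lemma~\ref{lem:system}). This solution is defined for large $r$, i.e.\ for $t$ near $+\infty$ along the orbit.
\item[] Two solutions with the same orbit differ by a time translation $t\mapsto t+\ln\lambda$. In terms of $v$, this corresponds exactly to the scaling $v\mapsto\lambda^{\gs}v(\lambda\,\cdot)$, because $Y=r^{p+s}v^{q-p+1}$ and $\gs(q-p+1)=p+s$. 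Hence the type (III) solutions form a single one-parameter family.
\end{itemize}

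\emph{Main obstacle.} The delicate point is Step 1: Theorem~\ref{thm:dulac}(i) must genuinely apply to the forward orbit. It excludes limit cycles and separatrix cycles through the equilibria, including cycles along the invariant axis such as a heteroclinic loop $P_{-}\to P_{+}\to P_{-}$. The real content is therefore the Dulac sign from \eqref{eq:divB}, which is positive in regime (C).

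A subsidiary point is to check that the $\omega$-limit set cannot touch $x=0$; this is exactly what $\liminf x>0$ guarantees.

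Finally, if the $\omega$-limit set were $P_{-}$ or $P_{+}$ approached along the axis direction, Step 2 excludes it by the saddle structure: a nonconstant orbit with $Y>0$ converging to a saddle must lie on its stable manifold.
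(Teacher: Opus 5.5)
Your proposal is correct and follows essentially the same route as the paper. Theorem~\ref{thm:apriori} and the Dulac criterion give convergence to an equilibrium; $P_{-}$ is excluded because its local stable manifold lies in $\{Y=0\}$; the repeller $P_{*}$ forces the constant orbit, extended to all $r>R_{0}$ by uniqueness as in Remark~\ref{rem:uniqueness}; and the $\{Y>0\}$ branch of the stable manifold of the saddle $P_{+}$ yields (III) via Lemma~\ref{lem:rate}. Your time-translation argument, identifying the scaling $v\mapsto\lambda^{\gs}v(\lambda\,\cdot\,)$ with the one-parameter family, is a useful addition, since the paper's proof leaves that part of the statement implicit.
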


\begin{theorem}[Nonexistence below the Serrin exponent]\label{thm:nonex}
Assume \eqref{eq:standing} and $p-1<q\le q_{S}$. Then \eqref{eq:ode} has
\emph{no} positive solution on any exterior interval $(R_{0},+\infty)$.
\end{theorem}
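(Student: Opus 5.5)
The plan is to argue by contradiction in the phase plane of \eqref{eq:system}. Suppose $v>0$ solves \eqref{eq:ode} on $(R_{0},+\infty)$ with $p-1<q\le q_{S}$, i.e.\ $\gs\ge\nu_{+}$ by \eqref{eq:regimes}. Theorem~\ref{thm:apriori} gives a trajectory $(x(t),Y(t))$ with $Y>0$ that for $t\ge t_{0}$ stays in the strip $0<x<\nu_{+}$, has $\liminf x>0$, and keeps $Y$ bounded. We then exploit the sign of $\dot Y$. Since $x(t)<\nu_{+}\le\gs$ for $t\ge t_{0}$, we get $\dot Y=(q-p+1)Y(\gs-x)>0$, so $Y$ is strictly increasing. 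Being bounded, $Y$ converges to some finite $Y_{\infty}>0$.

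Next we identify the $\omega$-limit set of the forward orbit. It lies in the compact set $\{\delta\le x\le\nu_{+},\ Y=Y_{\infty}\}$ for some $\delta>0$, and it is nonempty and invariant. Invariance forces $\dot Y=0$ on it, so $x=\gs$ there. Since $x\le\nu_{+}\le\gs$, this requires $\gs=\nu_{+}$ and the limit point $(\nu_{+},Y_{\infty})$. Invariance also requires $\dot x=0$ at that point, i.e.\ $\mu+Y_{\infty}-h(\nu_{+})=Y_{\infty}=0$. This contradicts $Y_{\infty}>0$, and in the case $\gs>\nu_{+}$ the $\omega$-limit set would have to be empty, which is already a contradiction.

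As an alternative to the $\omega$-limit argument, we can work directly with the integrated equation. From $\log Y(t)-\log Y(t_{0})=(q-p+1)\int_{t_{0}}^{t}(\gs-x)\,d\tau$, boundedness of $Y$ gives $\int^{\infty}(\gs-x)\,d\tau<\infty$ with a nonnegative integrand. Meanwhile $\dot x\ge c\,(\mu+Y(t_{0})-h(x))$ on the region where $x$ is bounded away from $0$. Near $x=\nu_{+}$ we have $h(x)\approx\mu$, so $\dot x\ge c\,Y(t_{0})/2>0$, and $x$ is pushed across $\nu_{+}$ in finite time. Combined with the integrability, which forces $x$ close to $\gs\ge\nu_{+}$ most of the time, this contradicts the barrier $x<\nu_{+}$.

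The main obstacle is the borderline case $q=q_{S}$, where $\gs=\nu_{+}$ and the equilibrium $P_{+}$ lies on the nullcline $x=\gs$. There we must rule out $x\to\nu_{+}$ and $Y\to0$, which is excluded precisely because $Y$ is increasing and positive; I would present the $\omega$-limit argument for both cases at once. A subsidiary point to check is that $\dot x$ is well defined, which holds since $x$ stays in a compact subset of $(0,\infty)$, where the system is analytic. Strictly positive $\dot x$ near $x=\nu_{+}$ then realises the barrier violation of Remark~\ref{rem:barrier}.
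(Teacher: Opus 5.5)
Your argument is correct, and it takes a genuinely different and more elementary route than the paper.

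\textbf{The paper's route.} The paper runs its general machinery. Theorem~\ref{thm:dulac}(i) (the Dulac function together with Poincar\'e--Bendixson and the exclusion of separatrix cycles) forces the orbit to converge to one of $P_{*},P_{-},P_{+}$. Each is then excluded by local analysis. The borderline case $\gs=\nu_{+}$ needs a centre-manifold reduction at $P_{+}$, showing that the reduced flow $\dot y\approx\frac{c\ell_{0}}{\lambda_{1}^{+}}y^{2}$ pushes orbits away.

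\textbf{Your route.} You observe that once Theorem~\ref{thm:apriori} confines the forward orbit to $\{x<\nu_{+}\}$, the hypothesis $\gs\ge\nu_{+}$ gives $\dot Y=cY(\gs-x)>0$ along the whole forward orbit, so $Y$ is strictly monotone. Hence the $\omega$-limit set lies on the line $\{Y=Y_{\infty}\}$ with $Y_{\infty}>0$. Invariance then pushes it into $\{x=\gs\}\cap\{x\le\nu_{+}\}$. That set is empty if $\gs>\nu_{+}$. If $\gs=\nu_{+}$ it is the single point $(\nu_{+},Y_{\infty})$, which is not an equilibrium since $F=\frac{\nu_{+}^{2-p}}{p-1}Y_{\infty}>0$ there.

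\textbf{Comparison.} Your argument handles $q<q_{S}$ and $q=q_{S}$ in one stroke. It uses no Dulac function, no Poincar\'e--Bendixson, and no linearisation or centre manifold. It also shows that, even inside the paper's framework, the local analysis at $P_{\pm}$ (including the centre-manifold step) is unnecessary, because an increasing positive $Y$ cannot tend to $0$. The paper's route buys only uniformity with its treatment of the other regimes.

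\textbf{Your second argument.} The ``integrated'' alternative is looser as written. The constant in $\dot x\ge c(\mu+Y(t_{0})-h(x))$ only makes sense where the bracket is positive. The phrase ``close to $\gs$ most of the time'' also needs sharpening. For $\gs>\nu_{+}$, the integrand is bounded below by $\gs-\nu_{+}>0$, so the integral diverges. For $\gs=\nu_{+}$, use that $x$ must enter every band $[\nu_{+}-\eps,\nu_{+})$, on which $\dot x$ is bounded below by a positive constant. The $\omega$-limit version is the one to present.
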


\begin{remark}
For $\mu\to0^{+}$ and $s=0$ the threshold $q_{S}$ tends to
$\frac{N(p-1)}{N-p}$, so Theorem~\ref{thm:nonex} recovers, in the limit, the
classical Serrin nonexistence exponent in exterior domains, identified for
$\mu=0$ in \cite{GueddaVeron}; for $p=2$ this is $\frac{N}{N-2}$. For $p=2$ and
$s=0$ but $\mu>0$, $q_{S}+1$ coincides with the exponent $2_{*}(\eta)$ of
\cite{FrancaSfecci}, see \eqref{eq:matchFS}. Since $\mu>0$ strictly increases
$q_{S}$ (because $\nu_{+}$ decreases with $\mu$), a positive Hardy coefficient
\emph{enlarges} the nonexistence range, as one expects from the fact that the
Hardy term then acts as an additional source.
\end{remark}

\begin{remark}\label{rem:delaunay}
The solutions of type {\rm(II)} in Theorem~\ref{thm:critical} are the analogue,
in the present setting, of the classical Fowler--Delaunay solutions of the
critical Yamabe equation $-\Delta u=u^{(N+2)/(N-2)}$, for which
$|x|^{(N-2)/2}u$ is periodic in $\ln|x|$; alternative {\rm(III)} is the analogue
of the Aubin--Talenti bubble, with the difference that, for $\mu>0$, it is
\emph{not} bounded at the origin but blows up like $r^{-\nu_{-}}$, regular
solutions being absent as soon as the Hardy term is present (compare
\cite[Rem.~2.7]{FrancaSfecci}), and {\rm(I)} of the singular solution. That the
interior equilibrium is a centre at the critical exponent, hence surrounded by
periodic orbits and enclosed by a loop joining the equilibria on $\{Y=0\}$, is
classical for $\mu=0$ \cite{Franca2004,Franca2009,JPY,BianchiEgnell} and was
established for $p=2$ with a Hardy potential in \cite[\S2.1]{FrancaSfecci}. In
the Fowler variables \eqref{eq:fowlervars} that loop is \emph{homoclinic} to
the origin; in the slope variables \eqref{eq:xY} the origin is blown up into
the two points $P_{\pm}$, and the loop becomes the \emph{heteroclinic} orbit
$\Gamma_{0}$ of Theorem~\ref{thm:critical}{\rm(III)}. Theorem~\ref{thm:critical}
extends this picture to $p\ne2$ with a Hardy potential and a weight $|x|^{s}$,
and makes it quantitative: the first integral \eqref{eq:firstintegral} is
explicit, and the value $\kappa$ of $\mathcal H$ parametrises the family.
\end{remark}

Our last result complements Theorem~\ref{thm:main} quantitatively: it exhibits
an explicit region of initial data leading to the self-similar profile
{\rm(I)}. Assuming $h(\gs)>\mu$, define, for $x>0$ and $Y>0$,
\begin{equation}\label{eq:Lyapunov}
V(x,Y):=(q-p+1)(p-1)\left[\frac{x^{p}}{p}-\gs\frac{x^{p-1}}{p-1}\right]
+\big(Y-Y_{*}\ln Y\big),
\qquad Y_{*}:=h(\gs)-\mu .
\end{equation}

\begin{theorem}\label{thm:lyap}
Assume \eqref{eq:standing} and $p^{*}_{s}-1<q<\bar q$, so that
$h(\gs)>\mu$. Then, along any solution of \eqref{eq:system} with $x>0$,
$Y>0$, the function $V$ of \eqref{eq:Lyapunov} satisfies
\begin{equation}\label{eq:Vdot}
\frac{d}{dt}V\big(x(t),Y(t)\big)
=-(q-p+1)\,\big(x-\gs\big)\big(h(x)-h(\gs)\big).
\end{equation}
Let $\gs^{\sharp}$ be the unique number in $\big(\frac{N-p}{p},
\frac{N-p}{p-1}\big)$ with $h(\gs^{\sharp})=h(\gs)$. Then $V$ is a Lyapunov
function on the strip $\{0<x<\gs^{\sharp},\ Y>0\}$: one has $\dot V\le0$
there, with equality exactly on the segment $\{x=\gs\}$. Setting
\begin{equation}\label{eq:basin}
c_{0}:=V\big(\gs^{\sharp},Y_{*}\big),
\qquad
\mathcal W:=\text{the component of }\big\{V<c_{0}\big\}\text{ containing }P_{*},
\end{equation}
the set $\mathcal W$ is an open bounded forward-invariant neighbourhood of
$P_{*}$ contained in $\{0<x<\gs^{\sharp}\}$, and every solution $v$ of
\eqref{eq:ode} whose orbit enters $\mathcal W$ satisfies alternative {\rm(I)}
of Theorem~\ref{thm:main}.
\end{theorem}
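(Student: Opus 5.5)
Differentiating $V$ along \eqref{eq:system} is the first step. We have $\partial_x V=(q-p+1)(p-1)\,x^{p-2}(x-\gs)$ and $\partial_Y V=1-Y_{*}/Y$. Hence
\[
\dot V=(q-p+1)(p-1)x^{p-2}(x-\gs)\cdot\frac{x^{2-p}}{p-1}\big[\mu+Y-h(x)\big]+\Big(1-\frac{Y_{*}}{Y}\Big)(q-p+1)Y(\gs-x).
\]
The factors $x^{p-2}x^{2-p}$ cancel, which is why the weight $(q-p+1)(p-1)$ was chosen. We get
\[
\dot V=(q-p+1)(x-\gs)\big[\mu+Y-h(x)-Y+Y_{*}\big]=-(q-p+1)(x-\gs)\big(h(x)-h(\gs)\big),
\]
using $Y_{*}=h(\gs)-\mu$. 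This is \eqref{eq:Vdot}.

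For the sign, note that in regime (A) we have $\gs<\frac{N-p}{p}$, and $h$ is increasing on $(0,\frac{N-p}{p})$. Since $\gs^{\sharp}$ is the other preimage of $h(\gs)$, we get $h(x)<h(\gs)$ for $x<\gs$ and $h(x)>h(\gs)$ for $\gs<x<\gs^{\sharp}$. So $(x-\gs)(h(x)-h(\gs))\ge0$ on the strip, with equality only at $x=\gs$, and therefore $\dot V\le0$ there.

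Next I establish the geometry of $\mathcal W$. Write $V(x,Y)=\phi(x)+\psi(Y)$. Here $\psi(Y)=Y-Y_{*}\ln Y$ is strictly convex, has its minimum at $Y_{*}$, and tends to $+\infty$ as $Y\to0^{+}$ and as $Y\to\infty$. Also $\phi(x)=(q-p+1)(p-1)\big[\frac{x^{p}}{p}-\gs\frac{x^{p-1}}{p-1}\big]$ has $\phi'(x)=(q-p+1)(p-1)x^{p-2}(x-\gs)$. So $\phi$ decreases on $(0,\gs)$, increases afterwards, has $\phi(0^{+})=0$, and tends to $+\infty$.

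Since $c_{0}=\phi(\gs^{\sharp})+\psi(Y_{*})$, the sublevel set $\{V<c_{0}\}$ is $\{\phi(x)<\phi(\gs^{\sharp})+\psi(Y_{*})-\psi(Y)\}$. It is contained in $\{\phi(x)<\phi(\gs^{\sharp})\}$, which is an interval $(x_{\flat},\gs^{\sharp})$ with $x_{\flat}\ge0$. If $\phi(\gs^{\sharp})\ge0$, then $x_{\flat}=0$. This is the delicate point: the set might touch $x=0$.

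I expect this to be the main obstacle. I would handle it by showing that for each $x$ the $Y$-section is a bounded interval around $Y_{*}$ (by convexity of $\psi$), bounded away from $0$ and $\infty$ uniformly. The reason is that $\psi(Y)<c_{0}-\min\phi$, so $Y$ stays in a compact subinterval of $(0,\infty)$. Then $\{V<c_{0}\}$ is connected, star-shaped in $Y$ and an interval in $x$. Boundedness holds since $x<\gs^{\sharp}$ and $Y$ is confined. In particular the closure of $\mathcal W$ meets $\{x=\gs^{\sharp}\}$ only at the point $(\gs^{\sharp},Y_{*})$, which lies outside $\mathcal W$ because $V$ is not below $c_0$ there.

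Forward invariance follows because $\mathcal W$ lies in the strip, where $\dot V\le0$. An orbit cannot exit through $\partial\mathcal W\cap\{x<\gs^{\sharp}\}$, since $V=c_{0}$ there and $V$ is nonincreasing. It cannot reach $x=\gs^{\sharp}$ either, since the only candidate point has $V=c_{0}$, and reaching it would require $V$ to stay constant at $c_{0}$, i.e. $x\equiv\gs$, which is a contradiction. Near $x=0$ the factor $x^{2-p}$ in $F$ might make things singular. However, on the part of $\partial\mathcal W$ near $x=0$ with $x<\gs$ we have $Y\ge Y_{\min}>0$ and $h(x)\to0$. So $\dot x>0$ there, because $\mu+Y-h(x)>0$, and the orbit is pushed inward. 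In any case, by Theorem~\ref{thm:apriori} we only need orbits of actual solutions, which satisfy $\liminf x>0$.

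Finally, apply LaSalle's principle. The orbit stays in the bounded set $\overline{\mathcal W}$, away from $Y=0$, and by the last observation away from $x=0$. Its $\omega$-limit set is invariant and contained in $\{\dot V=0\}=\{x=\gs\}$. The only invariant subset of that line is $P_{*}$, since $\dot x\ne0$ at points with $Y\ne Y_{*}$. So $(x,Y)\to P_{*}$, which means $rv'/v\to-\gs$ and $r^{p+s}v^{q-p+1}\to Y_{*}$. Taking the $(q-p+1)$-th root gives $r^{\gs}v\to(h(\gs)-\mu)^{1/(q-p+1)}$, which is alternative (I) of Theorem~\ref{thm:main}.
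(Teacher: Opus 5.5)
Your proposal is correct and follows the paper's proof essentially step by step. It uses the same computation of $\dot V$ and the same description of $\{V<c_{0}\}$ through the monotonicity of $\phi$ and the convexity of $\psi$; you even note that this sublevel set is connected, so $\mathcal W$ is all of it. It then keeps the orbit away from $x=0$ via Theorem~\ref{thm:apriori} and applies LaSalle on the segment $\{x=\gs\}$, where $\dot x=\frac{\gs^{2-p}}{p-1}(Y-Y_{*})$ isolates $P_{*}$.

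The one loose phrase is in the forward-invariance step. The point $(\gs^{\sharp},Y_{*})$ is unreachable simply because $V$ starts strictly below $c_{0}$ and is nonincreasing, while $V\ge c_{0}$ on all of $\{x=\gs^{\sharp}\}$. The same bound $V<c_{0}$ on the $\omega$-limit set is also what excludes that point, where $\dot V$ vanishes as well, from the LaSalle set.
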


The paper is organised as follows. Section~\ref{sec:system} derives
\eqref{eq:system} and records the linearisations at the equilibria.
Section~\ref{sec:apriori} proves Theorem~\ref{thm:apriori}.
Section~\ref{sec:dulac} proves Theorem~\ref{thm:dulac}.
Section~\ref{sec:lyap} proves Theorem~\ref{thm:lyap}.
Section~\ref{sec:main} proves Theorem~\ref{thm:main} and
Corollary~\ref{cor:H}. Finally Section~\ref{sec:critical} treats the critical
regime and Section~\ref{sec:sub} the subcritical and sub-Serrin ones.

%%%%%%%%%%%%%%%%%%%%%%%%%%%%%%%%%%%%%%%%%%%%%%%%%%%%%%%%%%%%%%%%%%%%%%%%%%%%%%%
\section{The Emden--Fowler system}\label{sec:system}
%%%%%%%%%%%%%%%%%%%%%%%%%%%%%%%%%%%%%%%%%%%%%%%%%%%%%%%%%%%%%%%%%%%%%%%%%%%%%%%

\begin{lemma}\label{lem:system}
Let $v>0$ solve \eqref{eq:ode} on an interval $I\subset(0,\infty)$ with
$v'<0$ on $I$. Then the functions $x,Y$ defined in \eqref{eq:xY} are positive,
of class $C^{1}$ in $t=\ln r$, and satisfy \eqref{eq:system}. Conversely, every
solution $(x,Y)$ of \eqref{eq:system} with $x>0$, $Y>0$ on an interval
$J\subset\R$ arises in this way from a solution $v>0$ of \eqref{eq:ode}, with
$v'<0$, on $\{\mathrm e^{t}:t\in J\}$; $v$ is recovered from
\begin{equation}\label{eq:recover}
v(r)=\big(Y(\ln r)\big)^{\frac{1}{q-p+1}}\,r^{-\gs},
\qquad
\frac{r v'(r)}{v(r)}=-x(\ln r) .
\end{equation}
\end{lemma}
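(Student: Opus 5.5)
The plan is a direct computation in the variable $t=\ln r$, followed by a reversal of the computation for the converse. Write $\phi(r):=r^{N-1}|v'|^{p-2}v'=-r^{N-1}(-v')^{p-1}$, which is $C^{1}$ by the definition of a solution. Since $v'<0$ on $I$, we have $-v'=xv/r$, hence
\[
\phi(r)=-r^{N-p}x^{p-1}v^{p-1}.
\]
This already gives positivity and regularity. Indeed, $x>0$ because $v>0$ and $v'<0$, and $Y>0$ trivially. Moreover $x^{p-1}=-\phi\,r^{p-N}v^{1-p}$ is $C^{1}$ in $r$, because $\phi$ and $v$ are $C^{1}$. Since $x>0$, the map $x\mapsto x^{p-1}$ is a $C^{1}$ diffeomorphism of $(0,\infty)$, so $x$ itself is $C^{1}$ for every $p>1$. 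This is the only point where the sign condition $v'<0$ matters: it is exactly what avoids the H\"older loss at $v'=0$.

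For the $x$ equation, I will differentiate the identity $x^{p-1}v^{p-1}r^{N-p}=-\phi$ with respect to $t$, using $\frac{d}{dt}=r\frac{d}{dr}$ and $\dot v=-xv$. The left side gives
\[
r^{N-p}v^{p-1}\big[(p-1)x^{p-2}\dot x-(p-1)x^{p}+(N-p)x^{p-1}\big].
\]
The right side is $-r\phi'=r^{N-p}\big(\mu v^{p-1}+r^{p+s}v^{q}\big)$ by \eqref{eq:ode}. Dividing by $r^{N-p}v^{p-1}$ and noting $r^{p+s}v^{q-p+1}=Y$, I get
\[
(p-1)x^{p-2}\dot x=\mu+Y-x^{p-1}\big(N-p-(p-1)x\big)=\mu+Y-h(x),
\]
which is the first equation of \eqref{eq:system}. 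The $Y$ equation follows from $\dot Y/Y=(p+s)-(q-p+1)x$, and this equals $(q-p+1)(\gs-x)$.

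For the converse, given $(x,Y)$ with $x,Y>0$ solving \eqref{eq:system} on $J$, I will define $v$ by \eqref{eq:recover}, so that $v$ is positive and $C^{1}$. Then $r v'/v=\frac{1}{q-p+1}\dot Y/Y-\gs=-x$ by the second equation, so $v'<0$ and $x=-rv'/v$ as required. Finally $\phi=-r^{N-p}x^{p-1}v^{p-1}$ is $C^{1}$, and running the computation above backwards, using the first equation, yields \eqref{eq:ode} pointwise.

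I expect no real obstacle here. The one point needing care is the regularity of $x$ when $p>2$, and it is handled by working with $x^{p-1}$ rather than $x$, as described above.
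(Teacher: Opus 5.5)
Your proposal is correct and follows the paper's proof essentially step by step: the same identity $r^{N-1}(-v')^{p-1}=r^{N-p}x^{p-1}v^{p-1}$, the same differentiation in $t$ leading to $(p-1)x^{p-2}\dot x=\mu+Y-h(x)$, the same computation of $\dot Y/Y$, and the same reversal for the converse. The only cosmetic difference is in the regularity step. You obtain $x\in C^{1}$ by inverting $x\mapsto x^{p-1}$ on $(0,\infty)$, whereas the paper inverts $z\mapsto z^{p-1}$ applied to $-v'>0$ to get $v'\in C^{1}$ first; the two arguments are equivalent.
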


\begin{proof}
We first note that $v'\in C^{1}(I)$, so that $x$ is indeed of class $C^{1}$:
the function $(-v')^{p-1}=-r^{1-N}\big(r^{N-1}|v'|^{p-2}v'\big)$ is $C^{1}$ and
\emph{positive} on $I$, and $z\mapsto z^{\frac{1}{p-1}}$ is smooth on
$(0,+\infty)$. This is where the slope variable pays off: the map
$\Phi(z)=|z|^{p-2}z$ is inverted away from $z=0$, its only singular point.

Since $v'<0$ we have $|v'|^{p-2}v'=-(-v')^{p-1}$ and $-v'=\frac{x v}{r}$, so
\[
r^{N-1}(-v')^{p-1}=r^{N-p}x^{p-1}v^{p-1}.
\]
Equation \eqref{eq:ode} reads
$\big(r^{N-1}(-v')^{p-1}\big)'=r^{N-1-p}v^{p-1}\big[\mu+r^{p+s}v^{q-p+1}\big]
=r^{N-1-p}v^{p-1}(\mu+Y)$, that is
\[
\big(r^{N-p}x^{p-1}v^{p-1}\big)'=r^{N-p-1}v^{p-1}(\mu+Y).
\]
Expanding the left-hand side and using $\frac{rv'}{v}=-x$,
$\frac{r x'}{x}=\frac{\dot x}{x}$, we get
\[
x^{p-1}\Big[(N-p)+(p-1)\frac{\dot x}{x}-(p-1)x\Big]=\mu+Y ,
\]
i.e.\ $(p-1)x^{p-2}\dot x=\mu+Y-(N-p)x^{p-1}+(p-1)x^{p}=\mu+Y-h(x)$, which is
the first equation of \eqref{eq:system}. For the second, differentiating
$Y=r^{p+s}v^{q-p+1}$ with respect to $t$ gives
\[
\dot Y=(p+s)Y+(q-p+1)r^{p+s}v^{q-p}\,r v'
=Y\big[(p+s)-(q-p+1)x\big]=(q-p+1)Y(\gs-x).
\]
Conversely, let $(x,Y)$ solve \eqref{eq:system} on $J$ with $x>0$, $Y>0$, and
define $v>0$ on $\{\mathrm e^{t}:t\in J\}$ by the first identity in
\eqref{eq:recover}. Then $\ln v=\frac{1}{q-p+1}\ln Y-\gs t$, so the second
equation of \eqref{eq:system} gives
\[
\frac{r v'(r)}{v(r)}=\frac{d}{dt}\ln v
=\frac{1}{q-p+1}\frac{\dot Y}{Y}-\gs=(\gs-x)-\gs=-x<0 ,
\]
which is the second identity in \eqref{eq:recover} and shows that $v'<0$;
reading the computation above backwards, the first equation of
\eqref{eq:system} is then equivalent to \eqref{eq:ode}.
\end{proof}

\begin{remark}\label{rem:analytic}
The vector field in \eqref{eq:system} is real-analytic on
$\{(x,Y):x>0\}$ for every $p>1$: the only possible singularity is the factor
$x^{2-p}$, which is analytic for $x>0$. This is in sharp contrast with the
formulations in the variables $(w,w')$ or $(x_{l},y_{l})$ of
\eqref{eq:francasystem}, in which one has to invert $\Phi(z)=|z|^{p-2}z$: the
inverse $\Phi^{-1}(z)=|z|^{\frac{2-p}{p-1}}z$ is only H\"older continuous at
$z=0$ when $p>2$, and it is exactly this that forces the standing restriction
$p\le2$ in \cite[\S2]{Franca2008} and \cite[Rem.~7]{DFS2026}. All the classical
local theory, stable manifold, centre manifold, Hartman--Grobman, is
therefore available here without any restriction on $p$.
\end{remark}

We shall also use the following elementary uniqueness property, which lets us
propagate backwards an identity valid only for large $r$.

\begin{remark}\label{rem:uniqueness}
Set $\Phi(z)=|z|^{p-2}z$ and $W=r^{N-1}\Phi(v')$. Then \eqref{eq:ode} is
equivalent to the first order system
\[
v'=\Phi^{-1}\big(r^{1-N}W\big),
\qquad
W'=-\big(\mu\,r^{N-1-p}v^{p-1}+r^{N-1+s}v^{q}\big),
\]
whose right-hand side is of class $C^{1}$, hence locally Lipschitz, on
$\{r>0,\ v>0,\ W\ne0\}$, because $\Phi^{-1}(z)=|z|^{\frac{2-p}{p-1}}z$ is
smooth away from $z=0$. Consequently the initial value problem is uniquely
solvable in both time directions as long as $v>0$ and $v'\ne0$, so that two
such solutions which agree on a subinterval agree on their whole common
interval of definition. In particular, a solution of \eqref{eq:ode} on
$(R_{0},+\infty)$ which coincides with $A_{*}r^{-\gs}$ for all large $r$
coincides with it on the whole of $(R_{0},+\infty)$.
\end{remark}

We record the linearisations at the three equilibria \eqref{eq:equilibria}.
Write $c:=q-p+1>0$ and recall
$h'(\gamma)=(p-1)\gamma^{p-2}\big(N-p-p\gamma\big)$.

\begin{lemma}\label{lem:linear}
Assume \eqref{eq:standing}.
\begin{enumerate}
\item[(i)] At $P_{\pm}=(\nu_{\pm},0)$ the Jacobian of \eqref{eq:system} is upper
triangular with eigenvalues
\[
\lambda_{1}^{\pm}=-\frac{\nu_{\pm}^{2-p}\,h'(\nu_{\pm})}{p-1}
=-\big(N-p-p\nu_{\pm}\big),
\qquad
\lambda_{2}^{\pm}=c\,(\gs-\nu_{\pm}) ,
\]
and $\lambda_{1}^{-}<0<\lambda_{1}^{+}$. The eigenvector associated with
$\lambda_{1}^{\pm}$ is $(1,0)$, so the invariant line $\{Y=0\}$ carries the
one-dimensional invariant manifold associated with $\lambda_{1}^{\pm}$, which
is stable at $P_{-}$ and unstable at $P_{+}$. Moreover:
\begin{itemize}
\item $\lambda_{2}^{-}>0$ if $\gs>\nu_{-}$ (then $P_{-}$ is a saddle whose local
stable manifold is contained in $\{Y=0\}$), and $\lambda_{2}^{-}<0$ if
$\gs<\nu_{-}$ (then $P_{-}$ is a stable node);
\item $\lambda_{2}^{+}<0$ if $\gs<\nu_{+}$ (regimes {\rm(A)--(C)}; then $P_{+}$
is a saddle whose local stable manifold is one-dimensional and transversal to
$\{Y=0\}$), $\lambda_{2}^{+}>0$ if $\gs>\nu_{+}$ (then $P_{+}$ is an unstable
node), and $\lambda_{2}^{+}=0$ if $\gs=\nu_{+}$.
\end{itemize}
\item[(ii)] If $\nu_{-}<\gs<\nu_{+}$, the Jacobian at $P_{*}$ has trace
$p\gs-(N-p)$ and determinant $\frac{c\,Y_{*}\gs^{2-p}}{p-1}>0$. Hence $P_{*}$ is
hyperbolic and asymptotically stable in regime {\rm(A)}, hyperbolic and
repelling in regime {\rm(C)}, and a linear centre in regime {\rm(B)}.
\end{enumerate}
\end{lemma}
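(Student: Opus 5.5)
The proof is a direct computation of the Jacobian of $\mathbf F=(F,G)$ from \eqref{eq:FG} at each equilibrium, followed by reading off signs using the properties of $h$ in \eqref{eq:nupm} and the regime characterisations \eqref{eq:regimes}. Write $c=q-p+1$. The partial derivatives are
\[
F_x=\frac{(2-p)x^{1-p}}{p-1}\big[\mu+Y-h(x)\big]-\frac{x^{2-p}h'(x)}{p-1},\quad
F_Y=\frac{x^{2-p}}{p-1},\quad
G_x=-cY,\quad G_Y=c(\gs-x).
\]

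For (i), at $P_{\pm}=(\nu_\pm,0)$ the bracket $\mu+Y-h(x)$ vanishes because $h(\nu_\pm)=\mu$, and $G_x=0$ because $Y=0$. The Jacobian is therefore upper triangular, with diagonal entries
\[
\lambda_1^\pm=-\nu_\pm^{2-p}h'(\nu_\pm)/(p-1), \qquad \lambda_2^\pm=c(\gs-\nu_\pm).
\]
Inserting $h'(\gamma)=(p-1)\gamma^{p-2}(N-p-p\gamma)$ gives $\lambda_1^\pm=-(N-p-p\nu_\pm)$. Since $\nu_-<\frac{N-p}{p}<\nu_+$, we get $\lambda_1^-<0<\lambda_1^+$.

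The eigenvector for $\lambda_1^\pm$ is $(1,0)$, since the first column of the triangular matrix is $(\lambda_1^\pm,0)^T$. The line $\{Y=0\}$ is invariant because $G$ carries the factor $Y$, so it contains the corresponding one-dimensional invariant manifold, which is stable at $P_-$ and unstable at $P_+$. The signs of $\lambda_2^\pm$ follow from the sign of $\gs-\nu_\pm$.

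The saddle and node statements follow from hyperbolicity and the stable manifold theorem, which applies by Remark~\ref{rem:analytic}. When $\lambda_2^-<0$, both eigenvalues at $P_-$ are negative, giving a stable node. When $\lambda_2^+>0$, both eigenvalues at $P_+$ are positive, giving an unstable node.

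For the saddle $P_-$ with $\lambda_2^->0$, the stable direction is $(1,0)$, so the local stable manifold is one-dimensional and tangent to $\{Y=0\}$. Since $\{Y=0\}$ is itself an invariant curve through $P_-$ consisting of points converging to $P_-$ near it, uniqueness of the local stable manifold forces it to lie in $\{Y=0\}$.

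For the saddle $P_+$ with $\lambda_2^+<0$, the stable eigenvalue is $\lambda_2^+$, distinct from $\lambda_1^+$. Its eigenvector $(F_Y,\lambda_2^+-\lambda_1^+)$ has nonzero second component, which gives transversality to $\{Y=0\}$.

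For (ii), at $P_*=(\gs,Y_*)$ with $Y_*=h(\gs)-\mu>0$, the bracket vanishes again and $G_Y=0$. The Jacobian is
\[
J_*=\begin{pmatrix}-\gs^{2-p}h'(\gs)/(p-1) & \gs^{2-p}/(p-1)\\ -cY_* & 0\end{pmatrix}.
\]
Its trace is $-(N-p-p\gs)=p\gs-(N-p)$, and its determinant is $cY_*\gs^{2-p}/(p-1)>0$.

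By \eqref{eq:regimes}, the trace is negative in (A), zero in (B) and positive in (C). A positive determinant together with a negative (respectively positive) trace gives eigenvalues with negative (respectively positive) real parts, so $P_*$ is a hyperbolic sink in (A) and a hyperbolic source in (C). A positive determinant with zero trace gives a purely imaginary pair, so $P_*$ is a linear centre in (B).

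Nothing here is a real obstacle. The one point that needs care is the claim that the local stable manifold of $P_-$ lies in $\{Y=0\}$, which follows from the invariance and uniqueness argument above.
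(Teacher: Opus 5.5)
Your proposal is correct and follows the paper's proof essentially verbatim: compute the partial derivatives of $F,G$, note that the bracket $\mu+Y-h(x)$ vanishes at each equilibrium and that $G_x=0$ at $P_{\pm}$ (respectively $G_Y=0$ at $P_{*}$), and read off eigenvalues, trace and determinant using $h'(\gamma)=(p-1)\gamma^{p-2}(N-p-p\gamma)$ and \eqref{eq:regimes}. You also justify the qualitative claims that the paper's proof leaves implicit: the local stable manifold of $P_{-}$ lies in $\{Y=0\}$ by invariance plus uniqueness, and transversality at $P_{+}$ follows from the eigenvector $(F_Y,\lambda_2^{+}-\lambda_1^{+})$, which the paper only writes down later, in the proof of Theorem~\ref{thm:main}.
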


\begin{proof}
Write $F(x,Y)=\frac{x^{2-p}}{p-1}(\mu+Y-h(x))$ and
$G(x,Y)=cY(\gs-x)$. Then
$F_{Y}=\frac{x^{2-p}}{p-1}$, $G_{x}=-cY$, $G_{Y}=c(\gs-x)$, and
\[
F_{x}=\frac{(2-p)x^{1-p}}{p-1}\big(\mu+Y-h(x)\big)-\frac{x^{2-p}h'(x)}{p-1}.
\]
At $P_{\pm}$ we have $Y=0$ and $h(\nu_{\pm})=\mu$, so the first term vanishes
and $F_{x}=-\frac{\nu_{\pm}^{2-p}h'(\nu_{\pm})}{p-1}
=-(N-p-p\nu_{\pm})$ by \eqref{eq:hgamma}; also $G_{x}=0$, which gives (i)
after recalling \eqref{eq:nupm} and \eqref{eq:qsuper}. At $P_{*}$ we have
$\mu+Y_{*}-h(\gs)=0$, so $F_{x}=-\frac{\gs^{2-p}h'(\gs)}{p-1}=-(N-p-p\gs)$ and
$G_{Y}=0$, whence $\mathrm{tr}=F_{x}+G_{Y}=p\gs-(N-p)$; its sign is that of
$\gs-\frac{N-p}{p}$, hence negative in regime {\rm(A)}, zero in regime {\rm(B)}
and positive in regime {\rm(C)}, by \eqref{eq:qsuper}. Finally
$\det=F_{x}G_{Y}-F_{Y}G_{x}=0+\frac{\gs^{2-p}}{p-1}cY_{*}>0$.
\end{proof}

%%%%%%%%%%%%%%%%%%%%%%%%%%%%%%%%%%%%%%%%%%%%%%%%%%%%%%%%%%%%%%%%%%%%%%%%%%%%%%%
\section{A priori bounds: proof of Theorem~\ref{thm:apriori}}\label{sec:apriori}
%%%%%%%%%%%%%%%%%%%%%%%%%%%%%%%%%%%%%%%%%%%%%%%%%%%%%%%%%%%%%%%%%%%%%%%%%%%%%%%

We begin with \eqref{eq:auto}, which is where the source terms on the
right-hand side of \eqref{eq:ode} enter.

\begin{lemma}\label{lem:auto}
Assume \eqref{eq:standing} and let $v>0$ solve \eqref{eq:ode} on
$(R_{0},+\infty)$. Then $v'(r)<0$ for all large $r$ and
$\lim_{r\to+\infty}v(r)=0$.
\end{lemma}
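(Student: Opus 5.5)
We work with the flux $W(r):=r^{N-1}|v'|^{p-2}v'$. By \eqref{eq:ode} we have $W'(r)=-\big(\mu r^{N-1-p}v^{p-1}+r^{N-1+s}v^{q}\big)<0$ on $(R_{0},+\infty)$, since $v>0$. So $W$ is strictly decreasing and has a limit $L\in[-\infty,+\infty)$ as $r\to\infty$. The plan is to show that $W$ eventually becomes negative, which gives $v'<0$ for large $r$. We then use the integrated equation to force $v\to0$.

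\emph{Step 1: $W<0$ eventually.} Suppose not. Then $W(r)>0$ for every $r>R_{0}$, because once $W$ is $\le0$ at some point it stays strictly negative afterwards. Hence $v'>0$ on the whole interval, so $v$ is increasing and $v(r)\ge v(R_{1})=:m>0$ for $r\ge R_{1}$. Integrate the equation for $W$ on $[R_{1},r]$:
\[
0<W(r)\le W(R_{1})-\mu\, m^{p-1}\int_{R_{1}}^{r}\rho^{N-1-p}\,d\rho .
\]
Because $N>p$, the integral on the right diverges as $r\to\infty$, which is a contradiction. Only the Hardy term is needed here, and $\mu>0$ makes it available; the source term $r^{N-1+s}v^{q}$ would also suffice since $N+s>0$. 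This proves $v'<0$ for all $r\ge R_{2}$.

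\emph{Step 2: $v\to0$.} On $[R_{2},\infty)$, $v$ is decreasing and positive, so $v\to\ell\ge0$. Suppose $\ell>0$. Then $v\ge\ell$, and the same integration gives
\[
W(r)\le W(R_{2})-\mu\,\ell^{p-1}\int_{R_{2}}^{r}\rho^{N-1-p}\,d\rho\le -c\,r^{N-p}
\]
for large $r$ and some $c>0$. This means $(-v')^{p-1}\ge c\,r^{1-p}$, i.e.\ $-v'\ge c^{1/(p-1)}r^{-1}$. Integrating this lower bound makes $v(r)\to-\infty$ logarithmically, which contradicts positivity. Hence $\ell=0$.

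The argument is elementary, and the only delicate point is the bookkeeping in Step 2. There the lower bound on $|W|$ must be converted into a non-integrable lower bound on $-v'$. This works exactly because $r^{N-p}\cdot r^{1-N}=r^{1-p}$ yields $r^{-1}$ after taking the $(p-1)$-th root. I expect no real obstacle beyond making sure that every inequality uses $N>p$ and $\mu>0$ (or alternatively $N+s>0$).
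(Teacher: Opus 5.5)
Your proof is correct and follows the paper's argument essentially line by line. Both use the strict monotonicity of the flux $W=r^{N-1}|v'|^{p-2}v'$, derive a contradiction from the divergence of $\int^{\infty}\rho^{N-1-p}\,d\rho$ (using $\mu>0$, $N>p$) if $W$ stayed positive, and then obtain the non-integrable bound $-v'\ge C r^{-1}$ if $v\to\ell>0$.

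One minor point concerns your parenthetical claim that the source term alone would also work. That is true in Step~1, but in Step~2 it yields $-v'\ge C r^{(1+s)/(p-1)}$, which needs $s\ge -p$ rather than merely $N+s>0$. This holds under \eqref{eq:standing}, so your main argument is unaffected.
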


\begin{proof}
Set $W(r):=r^{N-1}|v'|^{p-2}v'$, so that $\sgn W=\sgn v'$ and, by
\eqref{eq:ode},
\begin{equation}\label{eq:Wprime}
W'(r)=-\big(\mu\,r^{N-1-p}v^{p-1}+r^{N-1+s}v^{q}\big)<0 ,
\end{equation}
so $W$ is strictly decreasing.

\emph{Step 1: $v'<0$ for large $r$.} If $W(r_{1})\le0$ for some
$r_{1}>R_{0}$, then $W<0$, that is $v'<0$, on $(r_{1},+\infty)$ and we are
done. Otherwise $W>0$ on $(R_{0},+\infty)$, so $v'>0$ there; fixing
$R_{1}>R_{0}$ we get $v\ge m:=v(R_{1})>0$ on $[R_{1},+\infty)$, whence, by
\eqref{eq:Wprime} and $N>p$,
\[
W(r)\ \le\ W(R_{1})-\mu m^{p-1}\!\!\int_{R_{1}}^{r}\!\rho^{N-1-p}\,d\rho
=W(R_{1})-\frac{\mu m^{p-1}}{N-p}\big(r^{N-p}-R_{1}^{N-p}\big)
\ \longrightarrow\ -\infty ,
\]
contradicting $W>0$.

\emph{Step 2: $v(r)\to0$.} By Step 1 there is $r_{1}>R_{0}$ with $v'<0$ on
$(r_{1},+\infty)$, so $L:=\lim_{r\to+\infty}v(r)\ge0$ exists. Suppose $L>0$.
Then $v\ge L$ on $(r_{1},+\infty)$ and, as above,
$-W(r)\ge\frac{\mu L^{p-1}}{N-p}\big(r^{N-p}-r_{1}^{N-p}\big)$, so there are
$C>0$ and $r_{2}\ge r_{1}$ with $r^{N-1}(-v')^{p-1}\ge C\,r^{N-p}$ for
$r\ge r_{2}$. Hence $(-v')^{p-1}\ge C r^{1-p}$, i.e.
$-v'\ge C^{\frac{1}{p-1}}r^{-1}$, and integrating,
$v(r_{2})-v(r)\ge C^{\frac{1}{p-1}}\ln(r/r_{2})\to+\infty$, contradicting
$v>0$. Therefore $L=0$.
\end{proof}

\begin{proof}[Proof of Theorem~\ref{thm:apriori}]
Assertion \eqref{eq:auto} is Lemma~\ref{lem:auto}. By that lemma there is
$R_{1}>R_{0}$ with $v'<0$ on $(R_{1},+\infty)$, so Lemma~\ref{lem:system}
applies: $(x,Y)$ solves \eqref{eq:system} on $[t_{0},+\infty)$ with $x>0$,
$Y>0$, where $t_{0}=\ln R_{1}$. It remains to prove \eqref{eq:apriori}.

\smallskip
\noindent\emph{Step 1: $x$ stays away from $0$.} Fix $\delta\in(0,\nu_{-})$;
then $h$ is increasing on $(0,\delta]$ and $h(\delta)<h(\nu_{-})=\mu$. Since
$x^{2-p}>0$ for $x>0$, on $\{0<x\le\delta\}$ we have
\[
\dot x=\frac{x^{2-p}}{p-1}\big[\mu+Y-h(x)\big]
\ \ge\ \frac{x^{2-p}}{p-1}\big(\mu-h(\delta)\big)\ >\ 0 .
\]
Consequently, for every $\eta\in(0,\delta]$ the set $\{x>\eta\}$ is forward
invariant: an orbit reaching $x=\eta$ must cross it upwards. Taking
$\eta:=\min\{x(t_{0}),\delta\}>0$ we obtain $x(t)\ge\eta$ for all $t\ge t_{0}$,
whence $\liminf_{t\to\infty}x(t)\ge\eta>0$.

\smallskip
\noindent\emph{Step 2: $x(t)<\nu_{+}$ for all $t\ge t_{0}$.} We first note that
$x$ cannot blow up in finite time. Indeed $v>0$ and $v\in C^{1}$ on
$(R_{0},+\infty)$, so $x=\frac{r(-v')}{v}$ is continuous on
$(R_{0},+\infty)$ and therefore bounded on every compact subinterval; in
particular $x$ is finite at every finite $t$.

Suppose now, by contradiction, that $x(t_{1})\ge\nu_{+}$ for some
$t_{1}\ge t_{0}$. Since $h$ is strictly decreasing on
$\big(\frac{N-p}{p},\infty\big)$ and $\nu_{+}>\frac{N-p}{p}$, we have
$h(x)\le\mu$ for $x\ge\nu_{+}$, hence
\[
\dot x=\frac{x^{2-p}}{p-1}\big[\mu+Y-h(x)\big]
\ \ge\ \frac{x^{2-p}}{p-1}\,Y\ >\ 0 .
\]
Therefore $x$ is strictly increasing on $[t_{1},\infty)$ and stays above
$\nu_{+}$. Fix $t_{2}>t_{1}$ and put $x_{2}:=x(t_{2})>\nu_{+}$. For
$t\ge t_{2}$,
\[
\dot x\ \ge\ \frac{x^{2-p}}{p-1}\big[\mu-h(x)\big]
=\frac{x^{2-p}}{p-1}\Big[\mu-(N-p)x^{p-1}+(p-1)x^{p}\Big]=:\Theta(x) ,
\]
where $\Theta>0$ on $(\nu_{+},\infty)$ and $\Theta(x)=x^{2}\big(1+o(1)\big)$ as
$x\to+\infty$; in particular
$\int_{x_{2}}^{\infty}\frac{dx}{\Theta(x)}<+\infty$. Comparison with the
scalar equation $\dot z=\Theta(z)$, $z(t_{2})=x_{2}$, then shows that $x$
blows up at a finite time
$t^{*}\le t_{2}+\int_{x_{2}}^{\infty}\frac{dx}{\Theta(x)}$, contradicting the
first paragraph. Hence $x(t)<\nu_{+}$ for all $t\ge t_{0}$.

\smallskip
\noindent\emph{Step 3: $Y$ is bounded.} By Steps 1 and 2 the orbit satisfies
$x(t)\in[\eta,\nu_{+}]$ for all $t\ge t_{0}$, a compact subinterval of
$(0,+\infty)$; hence $\frac{x(t)^{2-p}}{p-1}\ge\vartheta$ for some
$\vartheta>0$. As $h\le\mub$,
on the set $\mathcal Y:=\{t\ge t_{0}:\ Y(t)\ge\mub\}$ we have
$\mu+Y-h(x)\ge\mu>0$, hence
\[
\dot x\ \ge\ \vartheta\,\mu\ =:c_{1}>0\qquad\text{on }\mathcal Y .
\]
Let $J$ be any connected component of $\mathcal Y$ and let $a\ge t_{0}$ be its
left endpoint. Since $x$ increases on $J$ and takes values in
$[\eta,\nu_{+}]$, for every $\tau\in J$ we get
$c_{1}(\tau-a)\le x(\tau)-x(a)\le\nu_{+}-\eta$; hence $J$ is bounded and
\[
|J|\ \le\ T_{1}:=\frac{\nu_{+}-\eta}{c_{1}} .
\]
On the other hand $\dot Y=cY(\gs-x)\le c\,\gs\,Y$ everywhere, so
$Y(t)\le Y(a)e^{c\gs T_{1}}$ for $t\in J$. If $a>t_{0}$ then $Y(a)=\mub$ by
continuity, whence $\sup_{J}Y\le\mub\,e^{c\gs T_{1}}$; if $a=t_{0}$ then
$\sup_{J}Y\le Y(t_{0})e^{c\gs T_{1}}$. Since
$Y<\mub$ outside $\mathcal Y$, we conclude
\[
\sup_{t\ge t_{0}}Y(t)\ \le\
\max\big\{\mub,\ Y(t_{0})\big\}\,e^{c\gs T_{1}}<+\infty .
\]

\end{proof}

\begin{remark}\label{rem:barrier}
Step~2 above also justifies calling $x=\nu_{+}$ a \emph{barrier}. Let $v>0$
solve \eqref{eq:ode} with $v'(r_{1})<0$ and
$x(t_{1})=-\frac{r_{1}v'(r_{1})}{v(r_{1})}\ge\nu_{+}$ at some
$r_{1}=\mathrm e^{t_{1}}$, and let $(r_{1},r^{*})$ be the maximal interval to
the right of $r_{1}$ on which $v>0$ and $v'<0$. Then $r^{*}<+\infty$ and
$v(r^{*})=0$. Indeed, if $r^{*}=+\infty$ then Lemma~\ref{lem:system} applies on
$(t_{1},+\infty)$ and Step~2, run from $t_{1}$ on, makes $x$ blow up at a finite
time, whereas $x$ is finite wherever $v>0$; hence $r^{*}<+\infty$. On
$[r_{1},r^{*})$ the function $v$ decreases with values in $(0,v(r_{1})]$, while
$W=r^{N-1}|v'|^{p-2}v'$ decreases and obeys
$|W'|\le\mu\,r^{N-1-p}v(r_{1})^{p-1}+r^{N-1+s}v(r_{1})^{q}$, which is bounded
there; so $(v,W)$ extends continuously to $r^{*}$, with
$W(r^{*})\le W(r_{1})<0$ and hence $v'(r^{*})<0$. Maximality then forces
$v(r^{*})=0$. Thus an orbit crossing $x=\nu_{+}$ corresponds to a solution
vanishing at the finite radius $r^{*}$, as asserted after
Theorem~\ref{thm:apriori} and in Figure~\ref{fig:portraits}{\rm(D)}.
\end{remark}

%%%%%%%%%%%%%%%%%%%%%%%%%%%%%%%%%%%%%%%%%%%%%%%%%%%%%%%%%%%%%%%%%%%%%%%%%%%%%%%

%%%%%%%%%%%%%%%%%%%%%%%%%%%%%%%%%%%%%%%%%%%%%%%%%%%%%%%%%%%%%%%%%%%%%%%%%%%%%%%
\section{Absence of cycles: proof of Theorem~\ref{thm:dulac}}\label{sec:dulac}
%%%%%%%%%%%%%%%%%%%%%%%%%%%%%%%%%%%%%%%%%%%%%%%%%%%%%%%%%%%%%%%%%%%%%%%%%%%%%%%

Throughout this section $\mathbf F=(F,G)$ is the vector field \eqref{eq:FG} of
\eqref{eq:system}, $c=q-p+1>0$, and $B$, $b$ are as in \eqref{eq:B}. Note that
$b>-1$: indeed $b+1=\frac{N-p}{p+s}>0$ because $N>p$ and $s>-p$.

\begin{proof}[Proof of \eqref{eq:divB} and of Theorem~\ref{thm:dulac}(ii)]
Since $\gs\,c=p+s$, we have $b+1=\frac{N-p}{\gs c}$, that is
\begin{equation}\label{eq:beta}
c\,(b+1)=\beta:=\frac{N-p}{\gs}.
\end{equation}
Now
\[
B\,F=x^{p-2}Y^{b}\cdot\frac{x^{2-p}}{p-1}\big[\mu+Y-h(x)\big]
=\frac{Y^{b}}{p-1}\big[\mu+Y-h(x)\big],
\qquad
B\,G=c\,x^{p-2}\,Y^{b+1}(\gs-x),
\]
so that
\[
\partial_{x}(BF)=-\frac{Y^{b}h'(x)}{p-1},
\qquad
\partial_{Y}(BG)=c(b+1)\,x^{p-2}Y^{b}(\gs-x)
=\beta\,x^{p-2}Y^{b}(\gs-x).
\]
Using $h'(x)=(p-1)x^{p-2}\big(N-p-px\big)$ we obtain
\[
\dv(B\mathbf F)=Y^{b}x^{p-2}\Big[-\big(N-p-px\big)+\beta(\gs-x)\Big]
=Y^{b}x^{p-2}\Big[(p-\beta)x+\beta\gs-(N-p)\Big].
\]
By \eqref{eq:beta}, $\beta\gs=N-p$, so the bracket equals $(p-\beta)x$ and
\[
\dv(B\mathbf F)=\Big(p-\frac{N-p}{\gs}\Big)x^{p-1}Y^{b},
\]
which is \eqref{eq:divB}. Finally the sign of $p-\frac{N-p}{\gs}$ is that of
$\gs-\frac{N-p}{p}$, so it is negative, zero or positive according to whether
$q>p^{*}_{s}-1$, $q=p^{*}_{s}-1$ or $q<p^{*}_{s}-1$, by \eqref{eq:qsuper}.

Assume now $q=p^{*}_{s}-1$, i.e.\ $\gs=\frac{N-p}{p}$, so that $\beta=p$ and
$b+1=\frac{p}{c}$ by \eqref{eq:beta}. With $\mathcal H$ as in
\eqref{eq:firstintegral} we have
\[
\partial_{Y}\mathcal H=\frac{Y^{b}}{p-1}\big[\mu-h(x)+Y\big]=B\,F ,
\]
and, since $h'(x)=(p-1)x^{p-2}(N-p-px)=p(p-1)x^{p-2}(\gs-x)$ and
$\frac{1}{b+1}=\frac cp$,
\[
\partial_{x}\mathcal H=-\frac{h'(x)\,Y^{b+1}}{(p-1)(b+1)}
=-\frac{p(\gs-x)x^{p-2}Y^{b+1}}{b+1}=-c\,x^{p-2}Y^{b+1}(\gs-x)=-B\,G .
\]
Hence $\frac{d}{dt}\mathcal H(x,Y)=\partial_{x}\mathcal H\,F
+\partial_{Y}\mathcal H\,G=B^{-1}\big(-BG\cdot BF+BF\cdot BG\big)=0$, so
$\mathcal H$ is a first integral. This proves part (ii) of
Theorem~\ref{thm:dulac}.
\end{proof}

\begin{remark}
The choice \eqref{eq:B} is not accidental. Looking for a Dulac function of the
monomial form $B=x^{a}Y^{b}$, one finds
\[
\dv(B\mathbf F)=\frac{Y^{b}x^{a+1-p}}{p-1}
\Big[(a+2-p)\big(\mu+Y-h(x)\big)-x\,h'(x)\Big]
+c(b+1)x^{a}Y^{b}(\gs-x).
\]
The bracket is the only term containing $Y$; in order for the $Y$-dependence of
$\dv(B\mathbf F)$ to reduce to the harmless positive factor $Y^{b}$, so that
the sign of the divergence is a function of $x$ alone, one is forced to take
$a=p-2$. The divergence is then $Y^{b}x^{p-2}$ times the affine function
$x\mapsto\big(p-c(b+1)\big)x+c(b+1)\gs-(N-p)$, and the choice \eqref{eq:beta} of
$b$ is the unique one for which this affine function vanishes at $x=0$, hence
keeps a constant sign on the whole of $(0,+\infty)$.
\end{remark}

\begin{proposition}\label{prop:nocycle}
Assume \eqref{eq:standing} and $q\ne p^{*}_{s}-1$. Then \eqref{eq:system} has
no periodic orbit
contained in $\{x>0,\ Y\ge0\}$, and no separatrix cycle (that is, no compact
invariant set consisting of finitely many equilibria together with orbits
connecting them and bounding a region) contained in $\{x>0,\ Y\ge0\}$.
\end{proposition}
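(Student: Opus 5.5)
The plan is the classical Bendixson--Dulac argument with the Dulac function $B=x^{p-2}Y^{b}$ of Theorem~\ref{thm:dulac}. There is one complication: $B$ is defined, and $\dv(B\mathbf F)$ has a strict sign, only on the open quadrant $\{x>0,\ Y>0\}$. On the boundary line $\{Y=0\}$, $Y^{b}$ may vanish or blow up according to the sign of $b$. So I first locate the possible cycles, and only then apply Green's theorem on a region cut away from $\{Y=0\}$.

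\emph{Step 1 (no cycle touches or crosses $\{Y=0\}$, except separatrix cycles using it).} The line $\{Y=0\}$ is invariant, since $G=cY(\gs-x)$. By uniqueness, which holds because the field is analytic on $\{x>0\}$ (Remark~\ref{rem:analytic}), no orbit in $\{Y>0\}$ meets it in finite time. A periodic orbit inside $\{Y=0\}$ is impossible, because that line is one-dimensional. Hence every periodic orbit lies in $\{x>0,Y>0\}$. A separatrix cycle either lies in the open quadrant, or contains boundary pieces: arcs of $\{Y=0\}$ between $P_{-}$ and $P_{+}$, plus orbits in $\{Y>0\}$ joining equilibria.

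\emph{Step 2 (periodic orbits and cycles in the open quadrant).} Let $\Gamma$ be a periodic orbit or separatrix cycle contained in $\{x>0,Y>0\}$, and let $D$ be the open region it bounds. Because the quadrant is convex, hence simply connected, $D$ lies in the quadrant as well. Green's theorem gives
\[
\iint_{D}\dv(B\mathbf F)\,dx\,dY=\oint_{\Gamma}\big(BF\,dY-BG\,dx\big)=0,
\]
since $\mathbf F$ is tangent to $\Gamma$. By \eqref{eq:divB} the integrand has a strict sign when $q\neq p^{*}_{s}-1$, which is a contradiction. For a separatrix cycle, $\Gamma$ is only piecewise $C^{1}$, with corners at equilibria, but Green's theorem still applies. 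Alternatively, one can use the standard approximation by inner curves.

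\emph{Step 3 (cycles using the boundary; the main obstacle).} Suppose $\Gamma$ consists of the segment $[\nu_{-},\nu_{+}]\times\{0\}$ together with an orbit in $\{Y>0\}$ from $P_{-}$ to $P_{+}$ or the reverse, possibly through $P_{*}$. I truncate the region at height $\varepsilon$: set $D_{\varepsilon}=D\cap\{Y>\varepsilon\}$ and apply Green's theorem there. The contribution of the curved part of $\partial D_{\varepsilon}$ vanishes, by tangency. On the horizontal cut $\{Y=\varepsilon\}$ the boundary term is $\int BF\,dx$ with $BF=\frac{\varepsilon^{b}}{p-1}[\mu+\varepsilon-h(x)]$. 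Over $x\in[x_{1}(\varepsilon),x_{2}(\varepsilon)]$ this equals
\[
\frac{\varepsilon^{b}}{p-1}\int_{x_{1}}^{x_{2}}(\mu+\varepsilon-h)\,dx .
\]
As $\varepsilon\to0$ the endpoints tend to $\nu_{\mp}$, where $\mu-h$ vanishes to first order, and $\mu-h<0$ strictly inside. So the cut term behaves like $-C\varepsilon^{b}$ with $C=\frac1{p-1}\int_{\nu_-}^{\nu_+}(h-\mu)\,dx>0$. The divergence integral over $D_{\varepsilon}$ is monotone in $\varepsilon$ and has a fixed sign. The task is to compare these two. If $b\ge0$, the cut term tends to $0$ or to a finite constant, while the area integral tends to a nonzero limit of fixed sign, because $Y^{b}$ is integrable near $Y=0$ when $b>-1$. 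I therefore need the precise identity: compute $\lim_{\varepsilon\to0}$ of the cut term, which is $-C$ if $b=0$, and compare its sign with that of $\iint_{D}\dv$. Since $b>-1$, $\iint_{D}$ is finite. If $b>0$ the cut term tends to $0$ and Green gives $\iint_{D}\dv=0$, a contradiction. If $-1<b\le0$, I would rather work in the variable $Z=Y^{b+1}$: this desingularizes $Y^{b}$, turns $\{Y=0\}$ into an ordinary boundary and makes $B$ regular there. Then I check that the boundary integral along $\{Z=0\}$ vanishes because $G$, and hence the flux through that segment, is zero. I expect this sign/limit bookkeeping at the boundary to be the delicate point. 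A cleaner fallback is to use the flux form $\oint B\,\mathbf F\cdot n\,ds$, whose integrand vanishes on $\{Y=0\}$ (because $n$ is vertical there and $G=0$) and on orbit arcs. Writing $BG=c\,x^{p-2}Y^{b+1}(\gs-x)\to0$, which holds since $b+1>0$, the flux through the cut $\{Y=\varepsilon\}$ tends to $0$. Green's theorem on $D_{\varepsilon}$ then gives $\iint_{D_\varepsilon}\dv\to0$, contradicting the strict sign together with monotone convergence. This flux version, with $BG$ rather than $BF$ on horizontal cuts, is the argument I will actually write.
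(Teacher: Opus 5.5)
Your final argument is the paper's proof. Periodic orbits are confined to the open quadrant by invariance of $\{Y=0\}$ and excluded there by the Dulac criterion. Separatrix cycles meeting $\{Y=0\}$ are handled by truncating to $D\cap\{Y>\varepsilon\}$: the flux through the horizontal cuts is $-BG=O(\varepsilon^{b+1})$, while monotone convergence, using $b>-1$ for integrability, gives $\iint_{D}\dv(B\mathbf F)\neq0$.

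Delete the exploratory computation of the cut term as $\int BF\,dx$. It is wrong, since $dY=0$ on a horizontal segment and only $-BG\,dx$ survives, and it is not needed. Also say explicitly that the truncation argument covers any cycle that meets $\{Y=0\}$, not only the configuration you list in Step~3.
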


\begin{proof}
\emph{Periodic orbits.} The line $\{Y=0\}$ is invariant, and on it the flow is
the scalar equation $\dot x=\frac{x^{2-p}}{p-1}(\mu-h(x))$, which has no
periodic orbit. Hence a periodic orbit $\Gamma$ would meet $\{Y>0\}$ and,
$\{Y=0\}$ being invariant, would be entirely contained in the open simply
connected quadrant $Q:=\{x>0,\ Y>0\}$, on which $B\in C^{1}$, $B>0$. Let $D$ be
the bounded open region enclosed by $\Gamma$; then $\overline D\subset Q$. Since
$\mathbf F$ is tangent to $\Gamma$, the divergence theorem gives
\[
0=\oint_{\Gamma}\big(B\mathbf F\big)\cdot\mathbf n\,d\sigma
=\iint_{D}\dv\big(B\mathbf F\big)\,dx\,dY\ \ne\ 0
\]
by \eqref{eq:divB}, because $\dv(B\mathbf F)$ has a strict sign on $Q$ and $D$
has positive measure; a contradiction.

\smallskip
\emph{Separatrix cycles.} Let $\Gamma$ be such a cycle, bounding a region $D$.
If $\Gamma\subset Q$ the previous argument applies verbatim. Otherwise
$\Gamma$ meets $\{Y=0\}$; since $\{Y=0\}$ is invariant and $\Gamma$ is a union
of orbits and equilibria, $\Gamma\cap\{Y=0\}$ is a union of equilibria and of
subintervals of $\{Y=0\}$. Moreover $D\subset Q$: indeed $\Gamma$ is a compact
subset of $\{x>0,\ Y\ge0\}$, so $\overline D\subset\{x>0,\ Y\ge0\}$, and an
interior point of $D$ with $Y=0$ would have a whole neighbourhood inside
$\{Y\ge0\}$, which is absurd. For $\eps>0$ put
$D_{\eps}:=D\cap\{Y>\eps\}$. For a.e.\ small $\eps>0$ the set $D_{\eps}$ is a
bounded open set whose boundary consists of arcs of $\Gamma$, along which
$\mathbf F$ is tangent, together with finitely many horizontal segments
contained in $\{Y=\eps\}$, along which the outward normal is $\mathbf n=(0,-1)$
and hence
\[
\big(B\mathbf F\big)\cdot\mathbf n=-B\,G
=-c\,x^{p-2}\,\eps^{\,b+1}\,(\gs-x).
\]
Since $x$ is bounded on $\overline D$ and $b+1>0$, the total flux through these
segments is $O(\eps^{\,b+1})\to0$ as $\eps\to0^{+}$. The divergence theorem on
$D_{\eps}$ therefore gives
\[
\iint_{D_{\eps}}\dv\big(B\mathbf F\big)\,dx\,dY=O\big(\eps^{\,b+1}\big).
\]
On the other hand, $\dv(B\mathbf F)=\big(p-\frac{N-p}{\gs}\big)x^{p-1}Y^{b}$ has
a strict sign and, since $b>-1$ and $\overline D$ is a bounded subset of
$\{x>0\}$, it is absolutely integrable on $D$; by monotone convergence
$\iint_{D_{\eps}}\dv(B\mathbf F)\to\iint_{D}\dv(B\mathbf F)\ne0$. This
contradiction proves the claim.
\end{proof}

\begin{proof}[Proof of Theorem~\ref{thm:dulac}]
Identity \eqref{eq:divB} and part (ii) were proved above. Assume
$q\ne p^{*}_{s}-1$; Proposition~\ref{prop:nocycle} excludes periodic orbits and
separatrix cycles. Let now $(x,Y)$ be a solution with $x>0$, $Y>0$ satisfying
\eqref{eq:apriori}: there are $\delta>0$, $M>0$ and $t_{0}$ with
\[
(x(t),Y(t))\in K:=[\delta,\nu_{+}]\times[0,M]\qquad\text{for }t\ge t_{0},
\]
and $K$ is a compact subset of the half-plane $(0,+\infty)\times\R$ on which
$\mathbf F$ is analytic. Hence the $\omega$-limit set $\omega$ of the orbit is a
nonempty, compact, connected, invariant subset of $K$. By the
Poincar\'e--Bendixson theorem \cite[\S1.7]{DLA}, $\omega$ is either a single
equilibrium, or a
periodic orbit, or consists of finitely many equilibria together with orbits
joining them. In the last case $\omega$ is a separatrix cycle in the sense of
Proposition~\ref{prop:nocycle}: if $\Gamma_{1}\subset\omega$ is a nonstationary
orbit, its $\alpha$- and $\omega$-limit sets are equilibria of $\omega$, and
since the orbit $(x,Y)$ accumulates on every point of $\omega$ it must return
to a neighbourhood of $\alpha(\Gamma_{1})$ after following $\Gamma_{1}$; the
usual argument with a transversal at a point of $\Gamma_{1}$, together with the
monotonicity of the successive intersections with it, then shows that the
equilibria and the connecting orbits of $\omega$ close up into a compact curve
bounding a region (see also \cite[\S1.7]{DLA}). The last two alternatives are
excluded by Proposition~\ref{prop:nocycle}, so $\omega=\{P\}$ for an equilibrium $P$ of
\eqref{eq:system} lying in $K$; by \eqref{eq:equilibria} and $\delta>0$,
$P\in\{P_{*},P_{-},P_{+}\}$.
\end{proof}

%%%%%%%%%%%%%%%%%%%%%%%%%%%%%%%%%%%%%%%%%%%%%%%%%%%%%%%%%%%%%%%%%%%%%%%%%%%%%%%

%%%%%%%%%%%%%%%%%%%%%%%%%%%%%%%%%%%%%%%%%%%%%%%%%%%%%%%%%%%%%%%%%%%%%%%%%%%%%%%
\section{The Lyapunov function: proof of Theorem~\ref{thm:lyap}}\label{sec:lyap}
%%%%%%%%%%%%%%%%%%%%%%%%%%%%%%%%%%%%%%%%%%%%%%%%%%%%%%%%%%%%%%%%%%%%%%%%%%%%%%%

\begin{proof}[Proof of Theorem~\ref{thm:lyap}]
Write $c=q-p+1$ and
$\Xi(x):=c(p-1)\big[\frac{x^{p}}{p}-\gs\frac{x^{p-1}}{p-1}\big]$, so that
$\Xi'(x)=c(p-1)(x-\gs)x^{p-2}$ and $V=\Xi(x)+\big(Y-Y_{*}\ln Y\big)$.
Along \eqref{eq:system},
\[
\frac{d}{dt}\Xi(x)=\Xi'(x)\dot x
=c(p-1)(x-\gs)x^{p-2}\cdot\frac{x^{2-p}}{p-1}\big[\mu+Y-h(x)\big]
=c\,(x-\gs)\big[\mu+Y-h(x)\big],
\]
\[
\frac{d}{dt}\big(Y-Y_{*}\ln Y\big)=\Big(1-\frac{Y_{*}}{Y}\Big)\dot Y
=c\,(Y-Y_{*})(\gs-x).
\]
Adding and using $\mu+Y_{*}=h(\gs)$,
\[
\dot V=c(x-\gs)\big[\mu+Y-h(x)-(Y-Y_{*})\big]
=c(x-\gs)\big[h(\gs)-h(x)\big],
\]
which is \eqref{eq:Vdot}.

Since $\gs<\frac{N-p}{p}$ and $h$ is strictly increasing on
$\big(0,\frac{N-p}{p}\big)$ and strictly decreasing afterwards, the number
$\gs^{\sharp}>\frac{N-p}{p}$ with $h(\gs^{\sharp})=h(\gs)$ is well defined and
\[
\big(x-\gs\big)\big(h(x)-h(\gs)\big)>0\qquad
\text{for } x\in(0,\gs^{\sharp})\setminus\{\gs\} .
\]
Hence $\dot V<0$ on $\{0<x<\gs^{\sharp}\}\setminus\{x=\gs\}$, while $\dot V=0$
on that strip exactly on the segment $\{x=\gs\}$. We stress that $\dot V$
vanishes on the whole of that segment, and not only at $P_{*}$; the conclusion
will therefore be reached through LaSalle's invariance principle rather than
through strict decrease.

We now describe the sublevel sets of $V$. The function
$\zeta(Y):=Y-Y_{*}\ln Y$ is strictly convex on $(0,\infty)$, has its strict
minimum at $Y=Y_{*}$, and $\zeta(Y)\to+\infty$ both as $Y\to0^{+}$ and as
$Y\to+\infty$. As for $\Xi$, we have $\Xi'(x)=c(p-1)(x-\gs)x^{p-2}$, so
$\Xi$ is strictly decreasing on $(0,\gs)$, strictly increasing on
$(\gs,\infty)$, and $\Xi(x)\to+\infty$ as $x\to+\infty$; in particular $\Xi$
has a strict global minimum at $x=\gs$ and is bounded below on $(0,\infty)$.
(We stress that $\Xi$ need \emph{not} be convex: for $p>2$ one computes
$\Xi''(x)=c(p-1)x^{p-3}\big[(p-1)x-(p-2)\gs\big]$, which is negative for
$x<\frac{(p-2)\gs}{p-1}$. Only the monotonicity just stated is used.)

Consequently $V=\Xi+\zeta$ has a strict global minimum at $P_{*}$, and, since
$\zeta$ attains its minimum at $Y_{*}$,
\[
c_{0}=V(\gs^{\sharp},Y_{*})=\min\big\{V(x,Y):\ x=\gs^{\sharp},\ Y>0\big\}.
\]
Let $\mathcal W$ be the connected component of $\{V<c_{0}\}$ containing
$P_{*}$. By the displayed identity, $\mathcal W$ does not meet
$\{x=\gs^{\sharp}\}$, hence $\mathcal W\subset\{0<x<\gs^{\sharp}\}$, so that
$\dot V<0$ on $\mathcal W\setminus\{x=\gs\}$ and $\mathcal W$ is forward
invariant. Moreover $\mathcal W$ is bounded: its $x$-range lies in
$(0,\gs^{\sharp})$, and on $\mathcal W$ one has
$\zeta(Y)<c_{0}-\inf_{(0,\infty)}\Xi<+\infty$, which confines $Y$ to a compact
subinterval $[Y_{1},Y_{2}]\subset(0,\infty)$. (The closure
$\overline{\mathcal W}$ may well meet the line $\{x=0\}$, namely when
$\Xi(\gs^{\sharp})>0$; this is harmless, since only the compact set
$\overline{\mathcal W}\cap\{x\ge\delta\}$ will be used below, with $\delta$
supplied by Theorem~\ref{thm:apriori}.)

Let now an orbit enter $\mathcal W$, say at time $\tau$. It remains there for
all later times, and by Theorem~\ref{thm:apriori} it also satisfies
$x(t)\ge\delta$ for some $\delta>0$; hence it remains in the compact set
$\overline{\mathcal W}\cap\{x\ge\delta\}\subset\{x>0,\ Y>0\}$, and its
$\omega$-limit set $\omega$ is nonempty, compact, connected and invariant.
Since $V$ is nonincreasing along the orbit,
$V\le V\big(x(\tau),Y(\tau)\big)<c_{0}$ on $\omega$,
so $\omega\subset\mathcal W$; and $V$ is constant on $\omega$, so $\dot V=0$
there, whence $\omega\subset\{x=\gs\}\cap\mathcal W$. On that segment
$\dot x=\frac{\gs^{2-p}}{p-1}(Y-Y_{*})$ vanishes only at $Y=Y_{*}$, so the only
invariant subset is $\{P_{*}\}$. Hence $\omega=\{P_{*}\}$, i.e.\
$x(t)\to\gs$ and $Y(t)\to Y_{*}$. By \eqref{eq:recover},
$r^{\gs}v(r)=Y^{1/c}\to Y_{*}^{1/c}=A_{*}$ and $\frac{rv'}{v}=-x\to-\gs$.
\end{proof}

\begin{remark}\label{rem:cycles}
The Lyapunov function $V$ is \emph{not} decreasing on the whole quadrant: the
set where $\dot V>0$ is $\{x>\gs^{\sharp}\}$, which is nonempty inside the
region $\{x<\nu_{+}\}$ where orbits live, since $h(\gs^{\sharp})=h(\gs)>\mu
=h(\nu_{+})$ forces $\gs^{\sharp}<\nu_{+}$. Integrating \eqref{eq:Vdot} over a
hypothetical periodic orbit shows that such an orbit would have to visit
$\{x>\gs^{\sharp}\}$, so $V$ alone does not exclude cycles. This is precisely
what the Dulac function of Theorem~\ref{thm:dulac} does, and it is why both
tools are needed: $B$ rules out cycles globally, while $V$ provides the
quantitative basin \eqref{eq:basin}.
\end{remark}

%%%%%%%%%%%%%%%%%%%%%%%%%%%%%%%%%%%%%%%%%%%%%%%%%%%%%%%%%%%%%%%%%%%%%%%%%%%%%%%
\section{Exact decay rates: proof of Theorem~\ref{thm:main}}\label{sec:main}
%%%%%%%%%%%%%%%%%%%%%%%%%%%%%%%%%%%%%%%%%%%%%%%%%%%%%%%%%%%%%%%%%%%%%%%%%%%%%%%

Let $v$ be as in Theorem~\ref{thm:main}. By Theorem~\ref{thm:apriori} we have
$v'<0$ for large $r$, the bounds \eqref{eq:apriori} hold, and by
Theorem~\ref{thm:dulac} the orbit $(x,Y)$ converges to one of $P_{*}$,
$P_{-}$, $P_{+}$. We first record which limits are possible, then compute the
corresponding decay rates.

\begin{lemma}\label{lem:which}
Let $(x,Y)$ be the orbit of a solution as above, so $Y(t)>0$ for all $t$.
\begin{enumerate}
\item[(i)] If $\gs>\nu_{-}$, the limit $P_{-}$ is impossible.
\item[(ii)] If $\gs<\nu_{-}$, the equilibrium $P_{*}$ does not exist in
$\{Y>0\}$, so the limit is $P_{-}$ or $P_{+}$.
\item[(iii)] If $\gs=\nu_{-}$, then $P_{*}=P_{-}=(\gs,0)$, and the limit is
$(\gs,0)$ or $P_{+}$.
\end{enumerate}
\end{lemma}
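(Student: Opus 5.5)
The plan is to treat the three parts separately. Parts (ii) and (iii) are direct consequences of the description of the equilibria in \eqref{eq:equilibria}, \eqref{eq:trichotomycases} and Theorem~\ref{thm:dulac}(i). Part (i) will use the local structure of $P_{-}$ from Lemma~\ref{lem:linear}(i).

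For (i), assume $\gs>\nu_{-}$ and suppose that $(x(t),Y(t))\to P_{-}$ with $Y>0$. By Lemma~\ref{lem:linear}(i), $P_{-}$ is then a hyperbolic saddle with eigenvalues $\lambda_{1}^{-}<0<\lambda_{2}^{-}=c(\gs-\nu_{-})$. Its local stable manifold is one-dimensional and, since $\{Y=0\}$ is invariant and tangent to the eigendirection $(1,0)$, it is contained in $\{Y=0\}$. The stable manifold theorem applies because the field is analytic near $P_{-}$ (Remark~\ref{rem:analytic}). Any orbit converging to $P_{-}$ must eventually lie on this local stable manifold, hence satisfy $Y\equiv0$, which contradicts $Y>0$.

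A more elementary route avoids invariant manifolds altogether. Since $x(t)\to\nu_{-}<\gs$, there are $\eps>0$ and $T$ with $\gs-x(t)\ge\eps$ for $t\ge T$. Then $\dot Y=cY(\gs-x)\ge c\eps Y$, so $Y(t)\ge Y(T)e^{c\eps(t-T)}\to\infty$, which contradicts $Y\to0$. I will use this argument, since it is one line.

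For (ii), assume $\gs<\nu_{-}$. Then $h(\gs)<\mu$ by \eqref{eq:trichotomycases}, so $Y_{*}=h(\gs)-\mu<0$ and $P_{*}$ does not lie in $\{Y\ge0\}$. The limit equilibrium supplied by Theorem~\ref{thm:dulac}(i), which lies in $K\subset\{Y\ge0\}$, must therefore be $P_{-}$ or $P_{+}$.

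For (iii), assume $\gs=\nu_{-}$. Then $h(\gs)=\mu$, so the formula gives $P_{*}=(\gs,0)=P_{-}$. The equilibria in $\{x>0,\,Y\ge0\}$ are then only $(\gs,0)$ and $P_{+}$. This case is degenerate, since $\lambda_{2}^{-}=0$, but Theorem~\ref{thm:dulac}(i) does not need hyperbolicity, so the conclusion follows from the list of equilibria alone.

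No real obstacle is expected here. The only point needing care is in (i): one must use that the limit is $P_{-}$ itself, so that $x$ is eventually bounded away from $\gs$, rather than the more delicate manifold argument.
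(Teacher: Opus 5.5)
Your proof is correct. Parts (ii) and (iii) coincide with the paper's: both read off the sign of $Y_{*}=h(\gs)-\mu$ from \eqref{eq:trichotomycases} and compare with the list \eqref{eq:equilibria}.

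For (i), the paper takes the route you sketch first and then set aside. Since $\lambda_{1}^{-}<0<\lambda_{2}^{-}$, $P_{-}$ is a hyperbolic saddle. Its one-dimensional local stable manifold is tangent to the invariant line $\{Y=0\}$, so it is a neighbourhood of $P_{-}$ in that line, and an orbit with $Y>0$ cannot converge to $P_{-}$.

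The argument you actually use instead integrates the second equation directly. Once $x(t)\le\gs-\eps$, the inequality $\dot Y\ge c\eps Y$ forces exponential growth of $Y>0$, which is incompatible with $Y\to0$. This is more elementary: it needs neither the stable manifold theorem nor the sign of $\lambda_{1}^{-}$, only $\lim x=\nu_{-}<\gs$ and the factor $Y$ in $\dot Y$. The same one-line estimate also shows that no orbit with $Y>0$ converges to $P_{+}$ when $\gs>\nu_{+}$, which the paper obtains in the proof of Theorem~\ref{thm:nonex} by calling $P_{+}$ a repeller.

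The paper's version has the advantage of fitting the linearisation framework of Lemma~\ref{lem:linear}, which it uses for every equilibrium. For this lemma, however, your shortcut loses nothing.
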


\begin{proof}
(i) If $\gs>\nu_{-}$ then, by Lemma~\ref{lem:linear}(i),
$\lambda_{1}^{-}=-(N-p-p\nu_{-})<0$ and $\lambda_{2}^{-}=c(\gs-\nu_{-})>0$, so
$P_{-}$ is a hyperbolic saddle. Its eigenvector for $\lambda_{1}^{-}$ is
$(1,0)$, so the local stable manifold of $P_{-}$ is a curve tangent to the
invariant line $\{Y=0\}$ at $P_{-}$; since $\{Y=0\}$ is invariant and
one-dimensional, the local stable manifold \emph{is} a neighbourhood of
$P_{-}$ in $\{Y=0\}$. An orbit with $Y>0$ therefore cannot converge to
$P_{-}$.

(ii) If $\gs<\nu_{-}$ then $Y_{*}=h(\gs)-\mu<0$ by
\eqref{eq:trichotomycases}, so $P_{*}\notin\{Y>0\}$.

(iii) If $\gs=\nu_{-}$ then $h(\gs)=\mu$, so $Y_{*}=0$ and $P_{*}=P_{-}$.
\end{proof}

\begin{lemma}\label{lem:rate}
Let $\nu\in\{\gs,\nu_{-},\nu_{+}\}$ and suppose that $x(t)\to\nu$ with
$\int^{\infty}|x(t)-\nu|\,dt<+\infty$. Then
$\lim_{r\to+\infty}r^{\nu}v(r)$ exists in $(0,+\infty)$, and equals
$\big(\lim_{t\to\infty}Y(t)\big)^{1/c}$ when $\nu=\gs$. Symmetrically, if the
orbit is defined for all $t\le t_{1}$ and $x(t)\to\nu$ as $t\to-\infty$ with
$\int_{-\infty}^{t_{1}}|x(t)-\nu|\,dt<+\infty$, then
$\lim_{r\to0^{+}}r^{\nu}v(r)$ exists in $(0,+\infty)$.
\end{lemma}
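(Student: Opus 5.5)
Since $\frac{rv'}{v}=-x(\ln r)$ by \eqref{eq:recover}, we have $\frac{d}{dt}\ln v(\mathrm e^{t})=-x(t)$. Integrating from a fixed $t_{0}$ gives
\[
\ln\big(r^{\nu}v(r)\big)=\nu t+\ln v(\mathrm e^{t})
=\ln v(\mathrm e^{t_{0}})+\nu t_{0}-\int_{t_{0}}^{t}\big(x(\tau)-\nu\big)\,d\tau ,
\qquad t=\ln r .
\]
We will use this identity throughout.

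By hypothesis $\int^{\infty}|x-\nu|\,dt<\infty$, so the integral on the right converges absolutely as $t\to+\infty$. Hence $\ln(r^{\nu}v(r))$ has a finite limit, and $\lim_{r\to\infty}r^{\nu}v(r)=\exp(\cdot)\in(0,+\infty)$. Positivity and finiteness are automatic because the exponent is finite. This argument does not depend on which of the three values $\nu$ is.

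For $\nu=\gs$ we identify the constant through the first identity of \eqref{eq:recover}: $r^{\gs}v(r)=Y(\ln r)^{1/c}$. The left side now has a limit $L\in(0,\infty)$, so $Y(t)\to L^{c}$ and $L=(\lim Y)^{1/c}$. Alternatively, we can use $\dot Y/Y=c(\gs-x)$, which is integrable, and integrate it directly; this gives the same conclusion.

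The backward statement is symmetric. The orbit is defined for $t\le t_{1}$, so by Lemma~\ref{lem:system} $v$ is a positive decreasing solution on $(0,\mathrm e^{t_{1}}]$. We use the same identity with base point $t_{1}$, so that $\ln(r^{\nu}v(r))=\ln v(\mathrm e^{t_{1}})+\nu t_{1}+\int_{t}^{t_{1}}(x-\nu)\,d\tau$. We then let $t\to-\infty$, using $\int_{-\infty}^{t_{1}}|x-\nu|<\infty$.

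There is no real obstacle here. The only points to check are that $t\mapsto\ln v(\mathrm e^{t})$ is $C^{1}$ with derivative $-x$, which is Lemma~\ref{lem:system}, and that absolute integrability yields convergence of the improper integral.
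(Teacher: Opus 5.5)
Your proposal is correct and follows the paper's proof essentially verbatim: you integrate $\frac{d}{dt}\ln v(\mathrm e^{t})=-x$ to write $\ln\big(r^{\nu}v(r)\big)$ as a constant minus $\int_{t_{1}}^{t}(x-\nu)\,d\tau$, pass to the limit using integrability, identify the constant through $r^{\gs}v=Y^{1/c}$ when $\nu=\gs$, and let $t\to-\infty$ for the backward statement. Your observation that, for $\nu=\gs$, the existence of $\lim_{t\to\infty}Y(t)$ follows from the argument (rather than being assumed) is a small but welcome precision.
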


\begin{proof}
Write $\tilde v(t):=v(\mathrm e^{t})$, so that
$\frac{d}{dt}\ln\tilde v=\frac{rv'}{v}=-x$ and
\[
\ln\big(r^{\nu}v(r)\big)=\ln\tilde v(t)+\nu t
=\ln\tilde v(t_{1})+\nu t_{1}-\int_{t_{1}}^{t}\big(x(\tau)-\nu\big)\,d\tau ,
\]
and the integral converges as $t\to+\infty$ by assumption; exponentiating gives
the first claim. If $\nu=\gs$, then $r^{\gs}v=Y^{1/c}$ by \eqref{eq:recover}.
The backward statement follows from the same identity, letting $t\to-\infty$.
\end{proof}

\begin{proof}[Proof of Theorem~\ref{thm:main}]
By Theorem~\ref{thm:dulac} the orbit converges to $P_{*}$, $P_{-}$ or $P_{+}$.

\smallskip
\noindent\emph{Limit $P_{*}$ with $h(\gs)>\mu$ (case (I)).} By
Lemma~\ref{lem:linear}(ii), $P_{*}$ is a hyperbolic equilibrium whose two
eigenvalues have negative real part. Any orbit converging to such an
equilibrium does so exponentially. Indeed, let $A$ be the Jacobian at $P_{*}$
and choose $\sigma>0$ with $\max\{\operatorname{Re}\lambda:\ \lambda\in
\operatorname{spec}A\}<-2\sigma$; by the Lyapunov equation, $\R^{2}$ carries an
inner product $\langle\cdot,\cdot\rangle_{*}$, with associated norm
$|\cdot|_{*}$, such that $\langle Az,z\rangle_{*}\le-2\sigma|z|_{*}^{2}$ for
every $z$. Since $\mathbf F(P_{*}+z)=Az+o(|z|)$ as $z\to0$, there is $\rho>0$
with $\langle\mathbf F(P_{*}+z),z\rangle_{*}\le-\sigma|z|_{*}^{2}$ whenever
$|z|_{*}\le\rho$; writing $z(t)=\big(x(t),Y(t)\big)-P_{*}$, which lies in that
ball for $t$ large, we obtain
$\frac{d}{dt}|z|_{*}^{2}\le-2\sigma|z|_{*}^{2}$. Hence there are
$C,\sigma>0$ with
\begin{equation}\label{eq:expdecay}
|x(t)-\gs|+|Y(t)-Y_{*}|\le C\,e^{-\sigma t}.
\end{equation}
In particular $\int^{\infty}|x-\gs|\,dt<\infty$ and $Y(t)\to Y_{*}$, so
Lemma~\ref{lem:rate} gives
$r^{\gs}v(r)\to Y_{*}^{1/c}=\big(h(\gs)-\mu\big)^{1/(q-p+1)}=A_{*}$, and
$\frac{rv'}{v}=-x\to-\gs$ exponentially.

\smallskip
\noindent\emph{Limit $P_{-}$ with $\gs<\nu_{-}$ (case (II)).} By
Lemma~\ref{lem:linear}(i) both eigenvalues
$\lambda_{1}^{-}=-(N-p-p\nu_{-})$ and $\lambda_{2}^{-}=c(\gs-\nu_{-})$ are
negative, so $P_{-}$ is hyperbolic and asymptotically stable; the argument just
given yields \eqref{eq:expdecay} with $(\gs,Y_{*})$ replaced by
$(\nu_{-},0)$, and Lemma~\ref{lem:rate} yields
$r^{\nu_{-}}v(r)\to C\in(0,+\infty)$ and $\frac{rv'}{v}\to-\nu_{-}$.

\smallskip
\noindent\emph{Limit $P_{+}$ (case (III)).} By Lemma~\ref{lem:linear}(i),
$P_{+}$ is a hyperbolic saddle, with $\lambda_{1}^{+}>0>\lambda_{2}^{+}$. An
orbit converging to a hyperbolic equilibrium lies, for large $t$, on its local
stable manifold \cite[Ch.~IX]{Hartman}, along which convergence is exponential
at any rate $\sigma<|\lambda_{2}^{+}|$. Hence \eqref{eq:expdecay} holds with
$(\nu_{+},0)$ in place of $(\gs,Y_{*})$, and Lemma~\ref{lem:rate} gives
$r^{\nu_{+}}v(r)\to C\in(0,+\infty)$ and $\frac{rv'}{v}\to-\nu_{+}$.

\smallskip
\noindent\emph{Limit $(\gs,0)$ with $\gs=\nu_{-}$ (case (IV)).} Here
$h(\gs)=\mu$ and $P_{*}=P_{-}=(\gs,0)$. By Lemma~\ref{lem:linear}(i) the
Jacobian at $(\gs,0)$ has eigenvalues $\lambda_{1}=-a<0$ and $\lambda_{2}=0$,
where
\[
a:=N-p-p\gs>0,\qquad \ell_{0}:=\frac{\gs^{2-p}}{p-1}>0,\qquad c=q-p+1 .
\]
Writing $\xi=x-\gs$, $y=Y$, the system reads
\[
\dot\xi=-a\xi+\ell_{0}\,y+R(\xi,y),\qquad \dot y=-c\,\xi\,y ,
\]
with $R$ analytic and $R(\xi,y)=O(\xi^{2}+y^{2})$ (Remark~\ref{rem:analytic}).
By the centre manifold theorem \cite{Carr} there is a local invariant manifold
$\xi=\psi(y)$, with $\psi(y)=\frac{\ell_{0}}{a}y+O(y^{2})$, on which the flow is
governed by the reduced equation
\begin{equation}\label{eq:reduced}
\dot y=-c\,\psi(y)\,y=-\frac{c\,\ell_{0}}{a}\,y^{2}+O(y^{3}) .
\end{equation}
The linearisation at $(\gs,0)$ has no unstable direction, the half-plane
$\{y\ge0\}$ is invariant (because $\{Y=0\}$ is), and $y=0$ is a stable
equilibrium of \eqref{eq:reduced} restricted to $y\ge0$. The asymptotic phase
property of the centre manifold \cite{Carr} therefore applies on that
half-plane: there are a solution $\tilde y>0$ of \eqref{eq:reduced} and
$\sigma>0$ with
\[
y(t)=\tilde y(t)+O\big(\mathrm e^{-\sigma t}\big),
\qquad
\xi(t)=\psi\big(\tilde y(t)\big)+O\big(\mathrm e^{-\sigma t}\big)
\qquad\text{as }t\to+\infty .
\]
Integrating \eqref{eq:reduced} gives
$\tilde y(t)=\frac{a}{c\,\ell_{0}\,t}\big(1+o(1)\big)$; the exponentially small
corrections are then negligible, so that
$y(t)=\frac{a}{c\,\ell_{0}\,t}\big(1+o(1)\big)$ and
$\xi(t)=\frac{\ell_{0}}{a}y(t)+O\big(y(t)^{2}\big)=\frac{1}{ct}+O(t^{-2})$. By
\eqref{eq:recover}, $v(r)=Y^{1/c}r^{-\gs}$, so
\[
v(r)=\Big(\frac{a}{c\ell_{0}}\Big)^{1/c}r^{-\gs}(\ln r)^{-1/c}\big(1+o(1)\big),
\qquad
\frac{rv'(r)}{v(r)}=-\gs-\frac{1}{c\ln r}+O\big((\ln r)^{-2}\big).
\]
Finally
\[
\frac{a}{c\ell_{0}}=\frac{(p-1)\,(N-p-p\gs)\,\gs^{p-2}}{c}
=\frac{(p-1)\big[(q-p+1)(N-p)-p(p+s)\big](p+s)^{p-2}}{(q-p+1)^{p}} ,
\]
which gives $C_{*}$.

\smallskip
\noindent\emph{Exclusiveness and occurrence.} That exactly one alternative
holds follows from Lemma~\ref{lem:which} together with
$\nu_{-}<\nu_{+}$ and $\gs\ne\nu_{+}$. All four occur, for suitable values of
the parameters: (I) is realised by the
explicit solution $v(r)=A_{*}r^{-\gs}$ when $h(\gs)>\mu$; (III) is realised
along the one-dimensional stable manifold of the saddle $P_{+}$, which by
Lemma~\ref{lem:linear}(i) is transversal to $\{Y=0\}$ (its eigenvector for
$\lambda_{2}^{+}$ is
$\big(\tfrac{\nu_{+}^{2-p}}{p-1},\,\lambda_{2}^{+}-\lambda_{1}^{+}\big)$ with
$\lambda_{2}^{+}-\lambda_{1}^{+}\ne0$) and therefore meets $\{Y>0\}$; (II) is
realised for $\gs<\nu_{-}$, where $P_{-}$ is an asymptotically stable node, so
that a whole neighbourhood of $P_{-}$ in $\{Y>0\}$ is attracted to it; and (IV)
is realised for $\gs=\nu_{-}$ along the centre manifold, which meets $\{y>0\}$
since $\psi(y)$ is defined for small $y>0$.
\end{proof}

\begin{proof}[Proof of Corollary~\ref{cor:H}]
Assume $\gs>\nu_{-}$. By Theorem~\ref{thm:main} and Lemma~\ref{lem:which}(i),
only alternatives (I) and (III) are possible. In case (I),
$r^{\gs}v\to A_{*}>0$; in case (III), $r^{\gs}v=r^{\gs-\nu_{+}}\cdot r^{\nu_{+}}v
\to0$ because $\gs<\frac{N-p}{p}<\nu_{+}$. This proves the stated equivalence.

Assume now that case (III) holds, so that there are $C_{1},C_{2},C_{3}>0$ and
$R_{1}>R_{0}$ with
\[
C_{1}r^{-\nu_{+}}\le v(r)\le C_{2}r^{-\nu_{+}},
\qquad
|v'(r)|=\frac{x(\ln r)}{r}\,v(r)\le C_{3}r^{-\nu_{+}-1},
\qquad r\ge R_{1} ,
\]
the last bound because $x(\ln r)\to\nu_{+}$, hence is bounded. Hence, for $r\ge R_{1}$,
\[
r^{N-1}|v'|^{p}\le C r^{N-1-p(\nu_{+}+1)},\qquad
r^{N-1-p}v^{p}\le Cr^{N-1-p-p\nu_{+}},
\]
\[
r^{N-1+s}v^{q+1}\le Cr^{N-1+s-\nu_{+}(q+1)} .
\]
The first two exponents are $<-1$ if and only if $N-p<p\,\nu_{+}$, which holds
by \eqref{eq:nupm}. For the third, \eqref{eq:qsuper} gives
$q+1>p^{*}_{s}=\frac{p(N+s)}{N-p}$, whence
$\nu_{+}(q+1)>\frac{N-p}{p}\cdot\frac{p(N+s)}{N-p}=N+s$, so that exponent is
$<-1$ as well. All three integrals therefore converge.
\end{proof}

%%%%%%%%%%%%%%%%%%%%%%%%%%%%%%%%%%%%%%%%%%%%%%%%%%%%%%%%%%%%%%%%%%%%%%%%%%%%%%%
\section{The critical regime: proof of Theorem~\ref{thm:critical}}
\label{sec:critical}
%%%%%%%%%%%%%%%%%%%%%%%%%%%%%%%%%%%%%%%%%%%%%%%%%%%%%%%%%%%%%%%%%%%%%%%%%%%%%%%

Throughout this section $q=p^{*}_{s}-1$, so $\gs=\frac{N-p}{p}$,
$Y_{*}=\mub-\mu>0$ and $\mathcal H$ is the first integral
\eqref{eq:firstintegral}. Recall from \eqref{eq:B} that
\[
b+1=\frac{p}{c}=\frac{N-p}{p+s},
\qquad
b+2=\frac{N+s}{p+s},
\qquad\text{so that}\qquad
\frac{b+2}{b+1}=\frac{N+s}{N-p} .
\]
We first describe the level sets of $\mathcal H$.

\begin{lemma}\label{lem:levelsets}
Let $\mathcal H$ be as in \eqref{eq:firstintegral} and put
\[
D:=\Big\{(x,Y):\ \nu_{-}<x<\nu_{+},\ \ 0<Y<\tfrac{b+2}{b+1}\big(h(x)-\mu\big)\Big\}.
\]
Then, in $\{x>0,\ Y>0\}$:
\begin{enumerate}
\item[(i)] $\{\mathcal H<0\}=D$, $D$ is bounded, and $P_{*}\in D$;
\item[(ii)] $\{\mathcal H=0\}$ is the curve
$\Gamma_{0}:=\big\{\big(x,\tfrac{b+2}{b+1}(h(x)-\mu)\big):\ \nu_{-}<x<\nu_{+}\big\}$,
which is a single orbit running from $P_{-}$ to $P_{+}$;
\item[(iii)] $P_{*}$ is the only critical point of $\mathcal H$ in
$\{x>0,\ Y>0\}$; it is a nondegenerate minimum and in fact the global minimum of
$\mathcal H$ there, with
$\mathcal H(P_{*})=-\frac{Y_{*}^{\,b+2}}{(p-1)(b+1)(b+2)}<0$;
\item[(iv)] for every $\kappa>0$ and every $x>0$ there is exactly one $Y>0$ with
$\mathcal H(x,Y)=\kappa$, and it satisfies $\mu+Y-h(x)>0$.
\end{enumerate}
\end{lemma}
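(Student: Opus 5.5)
The whole lemma rests on the factorisation
\[
\mathcal H(x,Y)=\frac{Y^{\,b+1}}{(p-1)(b+2)}\Big[Y-\tfrac{b+2}{b+1}\big(h(x)-\mu\big)\Big],
\]
valid on $\{x>0,\ Y>0\}$, where $Y^{b+1}>0$. Hence $\operatorname{sgn}\mathcal H=\operatorname{sgn}\big(Y-g(x)\big)$ with $g(x):=\frac{b+2}{b+1}(h(x)-\mu)$. By \eqref{eq:nupm}, $g>0$ exactly on $(\nu_-,\nu_+)$, so $\{\mathcal H<0\}=D$ and $\{\mathcal H=0\}=\Gamma_0$. The set $D$ is bounded because $x\in(\nu_-,\nu_+)$ and $Y<\frac{b+2}{b+1}(\mub-\mu)$. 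Moreover $P_*\in D$, since $Y_*=h(\gs)-\mu<g(\gs)$, the ratio $\frac{b+2}{b+1}=\frac{N+s}{N-p}$ being $>1$.

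For (iii), Theorem~\ref{thm:dulac}(ii) gives $\partial_Y\mathcal H=BF$ and $\partial_x\mathcal H=-BG$, with $B>0$ on the open quadrant. So critical points of $\mathcal H$ there are exactly the equilibria of \eqref{eq:system} with $Y>0$, i.e.\ only $P_*$. At $P_*$ the value is obtained by substituting $\mu-h(\gs)=-Y_*$:
\[
\mathcal H(P_*)=\frac{Y_*^{b+2}}{p-1}\Big(\frac{1}{b+2}-\frac{1}{b+1}\Big)=-\frac{Y_*^{b+2}}{(p-1)(b+1)(b+2)}.
\]
For the Hessian, $\partial_{YY}\mathcal H=Y^b/(p-1)>0$, the mixed derivative vanishes because $h'(\gs)=0$, and $\partial_{xx}\mathcal H=-h''(\gs)Y_*^{b+1}/((p-1)(b+1))>0$ since $\gs=\frac{N-p}{p}$ is the maximum of $h$. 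So $P_*$ is a nondegenerate minimum. For globality, fix $x$ and minimise in $Y$: the map $Y\mapsto\mathcal H(x,Y)$ has derivative $\frac{Y^b}{p-1}(\mu-h(x)+Y)$. If $h(x)\le\mu$ it is increasing and $\mathcal H(x,Y)>\mathcal H(x,0^+)=0$. Otherwise its minimum is attained at $Y=h(x)-\mu$, with value $-\frac{(h(x)-\mu)^{b+2}}{(p-1)(b+1)(b+2)}$. This is minimised where $h$ is maximal, i.e.\ at $x=\gs$, giving $\mathcal H(P_*)$.

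For (iv), the same one-variable analysis applies for fixed $x$. The function $\mathcal H(x,\cdot)$ tends to $0$ as $Y\to0^+$ and to $+\infty$ as $Y\to\infty$. It is decreasing on $(0,(h(x)-\mu)^+)$, where it is $\le0$, and strictly increasing afterwards. Hence each $\kappa>0$ is attained exactly once, on the increasing branch, where $\partial_Y\mathcal H>0$, i.e.\ $\mu+Y-h(x)>0$.

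The main obstacle is the orbit claim in (ii). $\Gamma_0$ is a connected level set containing no equilibria, since $P_*\notin\Gamma_0$. It is invariant, because $\mathcal H$ is a first integral and the vector field does not vanish on $\Gamma_0$. So it is a single regular orbit. For the direction of motion, note that on $\Gamma_0$ one has $\mu+Y-h=\frac{1}{b+1}(h-\mu)>0$ by the choice of $g$, hence $\dot x>0$. Thus $x$ increases monotonically along the graph, and since $x$ is bounded away from $0$ with $\dot x$ bounded below on compacts of $(\nu_-,\nu_+)$, it cannot stop before the endpoints. Therefore $x\to\nu_\pm$ as $t\to\pm\infty$, and along the graph $Y=g(x)\to0$, so the orbit runs from $P_-$ to $P_+$.
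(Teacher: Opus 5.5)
Your proof is correct and follows the paper's argument almost step by step. You use the same factorisation of $\mathcal H$ for (i)--(ii), the same monotonicity analysis of $Y\mapsto\mathcal H(x,Y)$ for (iv), and the same identity $\mu+Y-h(x)=\frac{h(x)-\mu}{b+1}>0$ on $\Gamma_0$ for the orbit claim. The one genuine variation is global minimality in (iii): you minimise fibrewise in $Y$ to get the explicit lower envelope $-\frac{(h(x)-\mu)^{b+2}}{(p-1)(b+1)(b+2)}$, smallest where $h$ is maximal, whereas the paper argues by compactness of $\overline D$, using $\mathcal H=0$ on $\partial D$ and $P_*$ being the only interior critical point. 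Your version is more explicit and shows directly that $P_*$ is the unique minimiser.
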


\begin{proof}
Write $\mathcal H(x,Y)=\frac{Y^{b+1}}{p-1}\Lambda(x,Y)$ with
$\Lambda(x,Y)=\frac{\mu-h(x)}{b+1}+\frac{Y}{b+2}$; since $Y^{b+1}>0$, the sign
of $\mathcal H$ is that of $\Lambda$, and $\Lambda<0$ (resp.\ $=0$) exactly when
$Y<\frac{b+2}{b+1}(h(x)-\mu)$ (resp.\ $=$), which requires $h(x)>\mu$, i.e.\
$x\in(\nu_{-},\nu_{+})$. This gives (i) and the description of
$\{\mathcal H=0\}$ in (ii); $D$ is bounded because $\nu_{\pm}$ are finite and
$h$ is bounded on $[\nu_{-},\nu_{+}]$, and $P_{*}=(\gs,Y_{*})\in D$ because
$\frac{b+2}{b+1}(h(\gs)-\mu)=\frac{b+2}{b+1}Y_{*}>Y_{*}$.

That $\Gamma_{0}$ is a single orbit follows from the fact that $\mathcal H$ is a
first integral: on $\Gamma_{0}$ one has
$\mu+Y-h(x)=\frac{b+2}{b+1}(h-\mu)-(h-\mu)=\frac{h(x)-\mu}{b+1}>0$, so
$\dot x>0$ there, and $\Gamma_{0}$ is a graph over $x\in(\nu_{-},\nu_{+})$ whose
endpoints are $P_{-}$ and $P_{+}$; hence it is a single orbit travelling from
$P_{-}$ to $P_{+}$.

(iii) Critical points: $\partial_{Y}\mathcal H=\frac{Y^{b}}{p-1}(\mu-h(x)+Y)=0$
gives $Y=h(x)-\mu$, and
$\partial_{x}\mathcal H=-\frac{h'(x)Y^{b+1}}{(p-1)(b+1)}=0$ gives $h'(x)=0$,
i.e.\ $x=\frac{N-p}{p}=\gs$; then $Y=h(\gs)-\mu=Y_{*}$. At $P_{*}$,
$\partial_{YY}\mathcal H=\frac{Y_{*}^{b}}{p-1}>0$,
$\partial_{xx}\mathcal H=-\frac{h''(\gs)Y_{*}^{b+1}}{(p-1)(b+1)}>0$ because
$h''(\gs)<0$ ($\gs$ is the strict maximum point of $h$), and
$\partial_{xY}\mathcal H=-\frac{h'(\gs)Y_{*}^{b}}{p-1}=0$. Hence the Hessian is
diagonal and positive definite, and the stated value follows from
$\mu-h(\gs)=-Y_{*}$. Finally $\mathcal H\ge0>\mathcal H(P_{*})$ outside $D$ by
(i), while on the compact set $\overline D$ the function $\mathcal H$ is
continuous, vanishes on $\partial D$ and has $P_{*}$ as its only interior
critical point; hence $\mathcal H(P_{*})=\min_{\{x>0,Y>0\}}\mathcal H$.

(iv) For fixed $x>0$, $Y\mapsto\mathcal H(x,Y)$ tends to $0$ as $Y\to0^{+}$ and
to $+\infty$ as $Y\to+\infty$, and $\partial_{Y}\mathcal H$ vanishes only at
$Y=h(x)-\mu$ (when positive), being negative before and positive after. Hence
$\mathcal H(x,\cdot)$ is either strictly increasing on $(0,\infty)$ (if
$h(x)\le\mu$) or decreasing then increasing (if $h(x)>\mu$); in both cases the
equation $\mathcal H(x,Y)=\kappa>0$ has exactly one root, and it lies in the
increasing branch, i.e.\ $Y>h(x)-\mu$, which is $\mu+Y-h(x)>0$.
\end{proof}

\begin{proof}[Proof of Theorem~\ref{thm:critical}]
Let $(x,Y)$ be the orbit of $v$, and $\kappa:=\mathcal H(x(t),Y(t))$, constant by
Theorem~\ref{thm:dulac}(ii). By Theorem~\ref{thm:apriori}, $x(t)<\nu_{+}$ and
$Y$ is bounded. Since $\mathcal H\ge\mathcal H(P_{*})$ on $\{x>0,\ Y>0\}$ by
Lemma~\ref{lem:levelsets}(iii), we already know that
$\kappa\ge\mathcal H(P_{*})$.

\smallskip
\noindent\emph{Step 1: $\kappa\le0$.} Suppose $\kappa>0$. By
Lemma~\ref{lem:levelsets}(iv) the orbit satisfies $\mu+Y-h(x)>0$ for all $t$,
hence $\dot x>0$ and $x$ is strictly increasing. Moreover, for each $x>0$ the
equation $\mathcal H(x,Y)=\kappa$ has a unique root $Y=\phi_{\kappa}(x)>0$,
which lies in
the branch where $\partial_{Y}\mathcal H>0$; by the implicit function theorem
$\phi_{\kappa}$ is continuous and positive on $(0,+\infty)$, and along the orbit
$Y(t)=\phi_{\kappa}(x(t))$. By Theorem~\ref{thm:apriori} the orbit satisfies
$\delta\le x(t)<\nu_{+}$ for some $\delta>0$, and the continuous function
$x\mapsto\frac{x^{2-p}}{p-1}\big(\mu+\phi_{\kappa}(x)-h(x)\big)$ is strictly
positive on the compact interval $[\delta,\nu_{+}]$, hence bounded below there
by some $c_{1}>0$. Therefore $\dot x\ge c_{1}$ for all $t$, so $x$ reaches
$\nu_{+}$ in finite time, contradicting Theorem~\ref{thm:apriori}.

\smallskip
\noindent\emph{Step 2: $\kappa=0$ gives alternative {\rm(III)}.} By
Lemma~\ref{lem:levelsets}(ii) the orbit is the heteroclinic orbit
$\Gamma_{0}$, so $x(t)\to\nu_{+}$, $Y(t)\to0$ as $t\to+\infty$ and
$x(t)\to\nu_{-}$, $Y(t)\to0$ as $t\to-\infty$. Both $P_{\pm}$ are hyperbolic
saddles by Lemma~\ref{lem:linear}(i) (here $\nu_{-}<\gs<\nu_{+}$), so the
convergence is exponential in $t$ at both ends; Lemma~\ref{lem:rate} applied
forward and backward gives
$r^{\nu_{+}}v(r)\to C\in(0,+\infty)$ as $r\to+\infty$ and
$r^{\nu_{-}}v(r)\to C'\in(0,+\infty)$ as $r\to0^{+}$. Since $\Gamma_{0}$ is
defined and bounded for all $t\in\R$, Lemma~\ref{lem:system} turns it into a
positive solution of \eqref{eq:ode} on $(0,+\infty)$, which by
Remark~\ref{rem:uniqueness} agrees with $v$ on $(R_{0},+\infty)$; thus $v$
extends to a positive solution on $(0,+\infty)$.

\smallskip
\noindent\emph{Step 3: $\mathcal H(P_{*})\le\kappa<0$ gives {\rm(I)} or
{\rm(II)}.} By Lemma~\ref{lem:levelsets}(i) the orbit lies in the bounded set
$D$, so it is bounded; by Poincar\'e--Bendixson \cite[\S1.7]{DLA} its
$\omega$-limit set is an equilibrium, a periodic orbit, or a set containing
equilibria (see the proof of Theorem~\ref{thm:dulac}). The $\omega$-limit set is
contained in $\overline D$ and $\mathcal H\equiv\kappa$ on it; since the
equilibria lying in $\overline D$, namely $P_{*}$ and $P_{\pm}$, have
$\mathcal H\in\{0,\mathcal H(P_{*})\}$, the case
$\kappa\in\big(\mathcal H(P_{*}),0\big)$ excludes equilibria from the
$\omega$-limit set altogether. Therefore the $\omega$-limit set is a periodic
orbit
$\Gamma\subset\{\mathcal H=\kappa\}$. By Lemma~\ref{lem:levelsets}(iii),
$\kappa$ is a regular value of $\mathcal H$ in $D$, so
$L_{\kappa}:=\{\mathcal H=\kappa\}\cap D$ is a one-dimensional embedded
submanifold of $\R^{2}$. Being compact and connected, $\Gamma$ is a connected
component of $L_{\kappa}$, and therefore admits an open neighbourhood
$\mathcal N\subset\R^{2}$ with $\mathcal N\cap L_{\kappa}=\Gamma$. Since the
orbit accumulates on $\Gamma$ it eventually enters $\mathcal N$; being contained
in $L_{\kappa}$, it then lies on $\Gamma$, and by invariance it coincides with
$\Gamma$. Thus the orbit is periodic, of some period $T>0$. Being defined for all
$t\in\R$, it yields, exactly as in Step~2, a positive solution of
\eqref{eq:ode} on $(0,+\infty)$ which agrees with $v$ on $(R_{0},+\infty)$; so
$v$ extends to $(0,+\infty)$. By \eqref{eq:recover},
$r^{\gs}v(r)=\big(Y(\ln r)\big)^{1/c}$ for every $r>0$, and the function
$\Psi(t):=Y(t)^{1/c}$ is positive, $T$-periodic and non-constant (otherwise the
orbit would be an equilibrium), which is {\rm(II)}.
Finally $\kappa=\mathcal H(P_{*})$ forces the orbit to be $\{P_{*}\}$, i.e.\
$x\equiv\gs$ and $Y\equiv Y_{*}$, that is $v(r)=A_{*}r^{-\gs}$ for all large
$r$, hence for every $r>R_{0}$ by Remark~\ref{rem:uniqueness}; this is
{\rm(I)}.

\smallskip
\noindent\emph{Occurrence.} {\rm(I)} is the explicit solution; {\rm(III)} is
$\Gamma_{0}$; and {\rm(II)} occurs for every
$\kappa\in(\mathcal H(P_{*}),0)$: indeed $Y\mapsto\mathcal H(\gs,Y)$ decreases
from $0$ (as $Y\to0^{+}$) to $\mathcal H(P_{*})$ at $Y=Y_{*}$, so there is a
unique $Y_{\kappa}\in(0,Y_{*})$ with $\mathcal H(\gs,Y_{\kappa})=\kappa$, and
the argument of Step~3 applied to the orbit through $(\gs,Y_{\kappa})$ shows
that this orbit is periodic. It is the only one at the level $\kappa$, so that
$\kappa$ does index the family in {\rm(II)}: along a periodic orbit in $D$ the
component $Y$ is not constant (otherwise $\dot Y\equiv0$ would force
$x\equiv\gs$, hence $Y\equiv Y_{*}$ and the orbit would be $\{P_{*}\}$), so
$Y$ attains a strict maximum and a strict minimum, at which $\dot Y=0$ and
therefore $x=\gs$; thus every periodic orbit at the level $\kappa$ meets
$\{x=\gs\}$ in at least two points. On the other hand
$\mathcal H(\gs,\cdot)$ takes the value $\kappa$ at exactly two points: besides
$Y_{\kappa}$, exactly one point of $\big(Y_{*},\tfrac{b+2}{b+1}Y_{*}\big)$,
because $\partial_{Y}\mathcal H(\gs,Y)=\frac{Y^{b}}{p-1}(Y-Y_{*})>0$ for
$Y>Y_{*}$, so that $\mathcal H(\gs,\cdot)$ increases there from
$\mathcal H(P_{*})$ to the value $0$, attained at
$Y=\frac{b+2}{b+1}Y_{*}$.
\end{proof}

%%%%%%%%%%%%%%%%%%%%%%%%%%%%%%%%%%%%%%%%%%%%%%%%%%%%%%%%%%%%%%%%%%%%%%%%%%%%%%%
\section{The subcritical and sub-Serrin regimes: proofs of
Theorems~\ref{thm:sub} and~\ref{thm:nonex}}\label{sec:sub}
%%%%%%%%%%%%%%%%%%%%%%%%%%%%%%%%%%%%%%%%%%%%%%%%%%%%%%%%%%%%%%%%%%%%%%%%%%%%%%%

\begin{proof}[Proof of Theorem~\ref{thm:sub}]
Here $\frac{N-p}{p}<\gs<\nu_{+}$, so in particular $q<p^{*}_{s}-1$ and
$\nu_{-}<\gs<\nu_{+}$, i.e.\ $P_{*}$ exists in $\{Y>0\}$. By
Theorem~\ref{thm:apriori} the orbit satisfies \eqref{eq:apriori}, and by
Theorem~\ref{thm:dulac}(i) it converges to one of $P_{*}$, $P_{-}$, $P_{+}$.

The limit $P_{-}$ is impossible: since $\gs>\nu_{-}$, Lemma~\ref{lem:linear}(i)
shows that $P_{-}$ is a saddle whose local stable manifold is contained in the
invariant line $\{Y=0\}$, which the orbit never meets.

The limit $P_{*}$ forces the orbit to be constant: by
Lemma~\ref{lem:linear}(ii) the Jacobian at $P_{*}$ has trace
$p\gs-(N-p)>0$ and positive determinant, so both eigenvalues have positive real
part and $P_{*}$ is a repeller. Applying to the reversed field $-\mathbf F$ the
argument used in case {\rm(I)} of the proof of Theorem~\ref{thm:main}, we find
$\rho,\sigma>0$ such that $\frac{d}{dt}|z|_{*}^{2}\ge2\sigma|z|_{*}^{2}$
whenever $0<|z|_{*}\le\rho$, where $z=(x,Y)-P_{*}$; hence every orbit other
than $\{P_{*}\}$ leaves the ball $\{|z|_{*}\le\rho\}$ in finite time and cannot
converge to $P_{*}$ as $t\to+\infty$. Hence $x\equiv\gs$, $Y\equiv Y_{*}$ and
$v(r)=A_{*}r^{-\gs}$ for all large $r$, therefore for every $r>R_{0}$ by
Remark~\ref{rem:uniqueness}. This gives alternative {\rm(I)}.

Finally, if the limit is $P_{+}$, then $P_{+}$ is a hyperbolic saddle
(Lemma~\ref{lem:linear}(i), using $\gs<\nu_{+}$), the orbit lies on its
one-dimensional local stable manifold, convergence is exponential, and
Lemma~\ref{lem:rate} gives $r^{\nu_{+}}v(r)\to C\in(0,+\infty)$ and
$rv'/v\to-\nu_{+}$; this is {\rm(III)}. That both alternatives occur is clear:
{\rm(I)} is the explicit solution, and the stable manifold of $P_{+}$ is
transversal to $\{Y=0\}$ (its eigenvector for $\lambda_{2}^{+}$ has a nonzero
second component because $\lambda_{2}^{+}\ne\lambda_{1}^{+}$), hence meets
$\{Y>0\}$.
\end{proof}

\begin{proof}[Proof of Theorem~\ref{thm:nonex}]
Here $\gs\ge\nu_{+}$, hence $q\le q_{S}<p^{*}_{s}-1$. Suppose, for
contradiction, that $v>0$ solves \eqref{eq:ode} on $(R_{0},+\infty)$. By
Theorem~\ref{thm:apriori} the orbit satisfies
\eqref{eq:apriori}, and by Theorem~\ref{thm:dulac}(i) it converges to one of the
equilibria \eqref{eq:equilibria}. We rule out all of them.

\emph{$P_{*}$ does not exist.} Indeed $\gs\ge\nu_{+}$ gives $h(\gs)\le
h(\nu_{+})=\mu$ by \eqref{eq:trichotomycases}, so $Y_{*}=h(\gs)-\mu\le0$.

\emph{$P_{-}$ is unreachable.} Since $\gs\ge\nu_{+}>\nu_{-}$,
Lemma~\ref{lem:linear}(i) shows that $P_{-}$ is a saddle whose local stable
manifold lies in $\{Y=0\}$.

\emph{$P_{+}$ is unreachable.} If $\gs>\nu_{+}$, then
$\lambda_{1}^{+}>0$ and $\lambda_{2}^{+}=c(\gs-\nu_{+})>0$ by
Lemma~\ref{lem:linear}(i), so $P_{+}$ is a repeller and no orbit with $Y>0$
converges to it. If $\gs=\nu_{+}$, then $\lambda_{1}^{+}>0$ and
$\lambda_{2}^{+}=0$. Writing $\xi=x-\nu_{+}$, $y=Y$, the system becomes
\[
\dot\xi=\lambda_{1}^{+}\xi+\ell_{0}\,y+O(\xi^{2}+y^{2}),
\qquad
\dot y=-c\,\xi\,y,
\qquad \ell_{0}:=\frac{\nu_{+}^{2-p}}{p-1}>0 .
\]
Since there is no stable direction, the local centre manifold theorem
\cite{Carr} (or the reduction principle, \cite[Ch.~IX]{Hartman}) shows that any
orbit remaining in a small neighbourhood of $P_{+}$ for all large $t$ lies on
the one-dimensional local centre manifold $\xi=\psi(y)$, with
$\psi(y)=-\frac{\ell_{0}}{\lambda_{1}^{+}}y+O(y^{2})$ (this value of $\psi'(0)$
follows from the invariance relation
$\lambda_{1}^{+}\psi(y)+\ell_{0}y+O(y^{2})=\psi'(y)\big(-c\,\psi(y)y\big)$).
On it the reduced flow is
\[
\dot y=-c\,\psi(y)\,y=\frac{c\,\ell_{0}}{\lambda_{1}^{+}}\,y^{2}+O(y^{3})>0
\qquad\text{for small }y>0 ,
\]
so $y$ is increasing and the orbit leaves the neighbourhood; hence no orbit with
$Y>0$ converges to $P_{+}$.

All three possibilities being excluded, we contradict
Theorem~\ref{thm:dulac}(i), which asserts that the orbit converges to one of the
equilibria \eqref{eq:equilibria}. Hence no such $v$ exists.
\end{proof}

\section*{Declarations}

%\noindent\textbf{Funding.} This research is funded by ...

\smallskip
\noindent\textbf{Conflict of interest.} The author declares that he has no
conflict of interest.

\smallskip
\noindent\textbf{Data availability.} Data sharing is not applicable to this
article, as no datasets were generated or analysed during the current study.

\smallskip
\noindent\textbf{Use of AI tools.} AI-assisted tools were used in developing and drafting this work; the author has verified its contents and takes full responsibility.

\end{document}